\theoremstyle{plain}
\newtheorem{theorem}{Theorem}[section] 
\newtheorem{lemmy}[theorem]{Lemma}
\newtheorem{cor}[theorem]{Corollary}
\theoremstyle{remark}
\newtheorem{rem}[theorem]{Remark}
\theoremstyle{definition}
\newcommand{\R}{\mathbb{R}}
\newcommand{\C}{\mathbb{C}}
\newcommand{\Hb}{\mathbb{H}}
\newcommand{\Z}{\mathbb{Z}}
\newcommand{\Q}{\mathbb{Q}}
\newcommand{\GL}{\operatorname{GL}}
\newcommand{\SL}{\operatorname{SL}}
\newcommand{\Tr}{\operatorname{Tr}}
\newcommand{\N}{\mathbb{N}}
\title[Fourier coefficients of Siegel modular forms]{New bounds for fundamental Fourier coefficients of Siegel modular forms}
\author{Edgar Assing}
\date{\today}
\email{assing@math.uni-bonn.de}
\thanks{The author is supported by the Germany Excellence Strategy grant EXC-2047/1-390685813 and also partially funded by the Deutsche Forschungsgemeinschaft (DFG, German Research Foundation) – Project-ID 491392403 – TRR 358.}
\subjclass[2020]{Primary: 11F30, 11F46, 11F50, 11L05}
\keywords{Siegel modular Forms, Fourier coefficients, Kloosterman sums}
\begin{document}

\begin{abstract}
We prove new bounds for the Fourier coefficients of Jacobi forms using a method of Iwaniec. In view of the Fourier-Jacobi expansion of degree two Siegel modular forms, we can use these to obtain strong bounds on fundamental Fourier coefficients of Siegel modular forms.
\end{abstract}

\maketitle

\section{Introduction}

Let $F\in S^{(2)}_k(\textrm{Sp}_4(\Z))$ be a cuspidal (holomorphic) Siegel modular form of genus $2$ and weight $k$. We have the Fourier expansion
\begin{equation}
	F(Z) = \sum_{T>0} a_F(T) e(\Tr(TZ)), \nonumber
\end{equation}
where the sum is taken over symmetric matrices with half integral entries (and integral diagonal). It is an interesting problem to study the growth of the coefficients $a_F(T)$ with respect to suitable parameters that capture the \textit{complexity} of $T$. The trivial bound, which reads
\begin{equation}
	a_F(T)\ll_F \det(T)^{\frac{k}{2}},\nonumber
\end{equation}
follows from a classical Hecke-style argument. If $F$ is not a Saito-Kurokawa lift or $T$ is fundamental, then it is believed that the bound
\begin{equation}
	a_F(T)\ll_{F,\epsilon} \det(T)^{\frac{k}{2}-\frac{3}{4}+\epsilon} \nonumber
\end{equation}
holds. This is (a version of) \cite[Conjecture~IV]{RS} by Resnikoff and Salda\~na. Progress towards this conjecture was made by Kitaoka in \cite{Ki}, where it was shown that
\begin{equation}
	a_F(T)\ll_{F,\epsilon} \det(T)^{\frac{k}{2}-\frac{1}{4}+\epsilon}. \label{eq:Kitaoka}
\end{equation}
This improvement is achieved by using a Petersson-type formula for Siegel modular forms, which is nowadays called Kitaoka's formula.

On the other hand, Raghavan and Weissauer established
\begin{equation}
	a_F(T)\ll_{F,\epsilon} \min(T)^{\frac{1}{2}}\det(T)^{\frac{k}{2}-\frac{1}{4}-\frac{3}{38}+\epsilon}, \nonumber
\end{equation}
where $\min(T)$ denotes the least positive integer represented by $T$. Note that this bound improves \eqref{eq:Kitaoka} only when $\min(T)$ is relatively small. 

Finally it was shown by Kohnen in \cite{Ko, Ko2} that
\begin{equation}
	a_F(T)\ll_{F,\epsilon} \min(T)^{\frac{5}{18}}\det(T)^{\frac{k}{2}-\frac{1}{2}+\epsilon}. \label{eq:Kohnen_for_siegel}
\end{equation}
Using the bound $\min(T)\ll \det(T)^{\frac{1}{2}}$ coming from reduction theory one obtains
\begin{equation}
	a_F(T)\ll_{F,\epsilon} \det(T)^{\frac{k}{2}-\frac{13}{36}+\epsilon}. \nonumber
\end{equation}
This bound has been generalized to congruence subgroups and more general weights in \cite{Ho}. However, to the best of our knowledge, there is no general unconditional improvement of \eqref{eq:Kohnen_for_siegel} available yet.

In this paper we improve Kohnen's result if $\min(T)$ is not too large. Our main result is the following.

\begin{theorem}\label{th:Siegel_bound}
Let $k>2$ be even and let $F\in S^{(2)}_k(\textrm{Sp}_4(\Z))$. Then, for fundamental $T$ (i.e. $-4\det(T)$ is a fundamental discriminant), we have
\begin{equation}
	a_F(T) \ll_{F,\epsilon} \left(1+\frac{\det(T)}{\min(T)^{\frac{25}{7}}}\right)^{-\frac{3}{144}}\cdot \det(T)^{\frac{k}{2}-\frac{1}{2}+\epsilon}\min(T)^{\frac{5}{18}}.\nonumber
\end{equation}
\end{theorem}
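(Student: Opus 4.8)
The plan is to deduce the theorem from a bound on Fourier coefficients of Jacobi cusp forms, obtained by the method of Iwaniec, combined with the Fourier--Jacobi expansion and the already known bound of Kohnen. First I would write $F(Z)=\sum_{m\geq 1}\phi_m(\tau,z)\,e(m\tau')$ with $Z=\smat{\tau}{z}{z}{\tau'}$, where each $\phi_m$ is a Jacobi cusp form of weight $k$ and index $m$. Given $T$, I may replace it by a $\GL_2(\Z)$-equivalent form without changing $a_F(T)$ or $\det(T)$, and arrange that the lower-right entry equals $\min(T)=:m$. Then $a_F(T)=c_{\phi_m}(n,r)$ is the $(n,r)$-th Fourier coefficient of $\phi_m$, with $r^2-4nm=-4\det(T)=:D$. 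Since $T$ is fundamental, $D$ is a fundamental discriminant, and through the theta decomposition $\phi_m=\sum_{\mu\bmod 2m}h_\mu\theta_{m,\mu}$ the coefficient $c_{\phi_m}(n,r)$ becomes the $|D|$-th coefficient of a component $h_\mu$ of a vector-valued cusp form of half-integral weight $k-\tfrac12$. The problem is thereby reduced to estimating a single half-integral weight coefficient, \emph{uniformly in the index} $m=\min(T)$.

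For the core estimate I would run Iwaniec's argument. Realising the coefficient as an inner product $\langle \phi_m, P_{m;n,r}\rangle$ against a (Jacobi-, resp.\ half-integral weight) Poincaré series and expanding that series in its own Fourier series expresses the coefficient through a diagonal main term plus a sum over moduli $c$ of Kloosterman-type sums weighted by Bessel functions; an auxiliary averaging as in Iwaniec then isolates the saving. The decisive point, and the reason Kloosterman sums appear in the keywords, is that in half-integral weight these sums are of \emph{Salié type}: they admit a closed-form evaluation and collapse to short exponential sums $\sum_x e(\cdots/c)$ supported on the solutions of a quadratic congruence modulo $c$. Substituting this explicit evaluation turns the $c$-sum into an exponential sum in $c$ and in the square-root variable, which carries cancellation well beyond the Weil bound. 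I would estimate it by completion and reciprocity, exploiting in addition the averaging over the residue $r\bmod 2m$ supplied by the $2m$ theta components, all the while tracking every factor of $m$.

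The outcome, after balancing the length of the $c$-sum against the Bessel transition range and the size of the Salié support, is a bound of the shape $a_F(T)\ll_{F,\epsilon}\det(T)^{k/2-1/2+\epsilon}\min(T)^{5/18}\bigl(\det(T)/\min(T)^{25/7}\bigr)^{-3/144}$, valid in the regime $\det(T)\gg\min(T)^{25/7}$ where the extra factor is genuinely $<1$. In the complementary regime $\det(T)\ll\min(T)^{25/7}$ the extra factor is $\asymp 1$, so there the claimed bound is precisely Kohnen's bound \eqref{eq:Kohnen_for_siegel}. Since $\bigl(1+\det(T)/\min(T)^{25/7}\bigr)^{-3/144}\asymp\min\bigl(1,(\det(T)/\min(T)^{25/7})^{-3/144}\bigr)$, combining the two ranges yields the uniform statement of the theorem.

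The main obstacle will be the uniform bookkeeping of the index $m=\min(T)$. A black-box appeal to known half-integral weight bounds only reproduces Kohnen's exponent $\tfrac{5}{18}$; the improvement requires extracting cancellation from the Salié-sum exponential sums \emph{uniformly in both $m$ and $D$}, and it is exactly the interplay between the index aspect and the discriminant aspect that is delicate. Pinning down the threshold $\tfrac{25}{7}$ and the saving $\tfrac{3}{144}$ amounts to optimising this interplay, and checking that the exponential-sum estimates retain their strength across the transitional ranges of the Bessel factor is where the genuine work lies.
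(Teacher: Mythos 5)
There is a genuine gap: your argument never controls the Petersson norm of the Fourier--Jacobi coefficient $\phi_{m_0}$, and without that input the factor $\min(T)^{\frac{5}{18}}$ in the theorem cannot be produced. Whatever variant of Iwaniec's method you run (on the Jacobi Petersson formula directly, as the paper does, or on the half-integral weight components $h_\mu$ of the theta decomposition, as you propose), the Poincar\'e-series/Petersson machinery only ever bounds the \emph{normalized} quantity $\vert c_{\phi_{m_0}}(n_0,r_0)\vert / \Vert \phi_{m_0}\Vert$; the best such bound has the shape $(\vert D\vert/m_0)^{k/2-1/2}$ times a saving factor. To pass from this to $a_F(T)=c_{\phi_{m_0}}(n_0,r_0)$ you must multiply by $\Vert\phi_{m_0}\Vert$, and how $\Vert\phi_{m_0}\Vert$ grows with $m_0=\min(T)$ is an arithmetic statement about the fixed Siegel form $F$ that no amount of exponential-sum cancellation can supply. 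The paper imports exactly this as a separate known theorem, $\Vert\phi_m\Vert \ll_{F,\epsilon} m^{\frac{k}{2}-\frac{2}{9}+\epsilon}$ (Theorem~\ref{th:Pet_Fjac}), and the exponent $\tfrac{5}{18}$ arises only as $\tfrac12-\tfrac29=\tfrac{5}{18}$ when this is combined with the normalized Jacobi bound. Your claim that ``the outcome, after balancing'' of the Kloosterman analysis is a bound on $a_F(T)$ itself carrying $\min(T)^{5/18}$ is therefore unjustified as written; ``tracking every factor of $m$'' inside the sums cannot close this, since the missing factor lives in the normalization, not in the sums.

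Two further points of comparison. First, the paper does not use the theta decomposition into vector-valued half-integral weight forms at all: it works with a Petersson formula for Jacobi cusp forms of level $\Gamma_0(N)^J$, whose geometric side already exhibits the Sali\'e-type sums $H^{\pm}_{m,c}(n,r)$, and Iwaniec's averaging is implemented by regarding the level-one form as a form of level $p$ for each prime $p\asymp P$ and using positivity of the spectral side; your hoped-for extra ``averaging over the residue mod $2m$ supplied by the $2m$ theta components'' is neither needed nor available, since the diagonal term of the formula already isolates a single pair $(n,r)$. Second, you nowhere say where fundamentality of $D$ is actually used analytically: in the paper it enters the Weil-type bound (Lemma~\ref{lm:imp_Kl_bound}, giving $(c,D)^{1/2}$ rather than a worse gcd factor) and, crucially, the bilinear estimate, where after opening the prime average one needs the Polya--Vinogradov inequality for the real character $\left(\tfrac{D}{\cdot}\right)$, which is non-principal modulo the relevant moduli precisely because $D$ is fundamental. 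Your two-regime interpolation at the end (new bound for $\det(T)\gg\min(T)^{25/7}$, Kohnen's bound otherwise, glued by $\bigl(1+x\bigr)^{-3/144}\asymp\min\bigl(1,x^{-3/144}\bigr)$) does match the paper's logic and is fine once the core estimate and the norm input are in place.
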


This improves upon \eqref{eq:Kohnen_for_siegel} as soon as $\min(T)\ll \det(T)^{\frac{7}{25}}$. We also obtain the following amusing corollary, which features a slight improvement on \cite[Theorem~1 and Theorem~2]{Ko3}.

\begin{cor}\label{cor}
Let $k>2$ be even, $F\in S^{(2)}_k(\textrm{Sp}_4(\Z))$ and let $D<0$ be a fundamental discriminant. Then, in any genus of $\textrm{Cl}_D/\textrm{Cl}_D^2$, there exists $T$ with $D=-4\det(T)$ such that
\begin{equation}
 	a_F(T) \ll_{F,\epsilon} \vert D\vert^{\frac{k}{2}-\frac{1}{2}+\frac{131}{2016}+\epsilon}.\nonumber
\end{equation} 
If we assume the generalized Lindel\"of hypothesis (GLH) for $L(s,\chi_{\Delta})$ for all fundamental discriminants $\Delta$, then we even find $T$ as above with
\begin{equation}
	a_F(T) \ll_{F,\epsilon} \vert D\vert^{\frac{k}{2}-\frac{1}{2}-\frac{1}{144}+\epsilon}.\nonumber
\end{equation} 
\end{cor}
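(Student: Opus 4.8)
The plan is to pick, inside each genus, a form $T$ whose minimum is as small as possible and then to feed that choice into Theorem~\ref{th:Siegel_bound}. First note that since $k$ is even, the transformation $a_F({}^tUTU)=(\det U)^k a_F(T)$ for $U\in\GL_2(\Z)$ shows $a_F(T)$ to be a $\GL_2(\Z)$-class invariant, and that a fundamental $T$ with $-4\det(T)=D$ has $\det(T)=|D|/4$ and is itself fundamental, so Theorem~\ref{th:Siegel_bound} applies verbatim. Reduction theory identifies such $T$ with positive definite primitive binary quadratic forms of discriminant $D$, under which $\min(T)$ is the least integer primitively represented by the form. Factoring $D=\Delta_1\cdots\Delta_t$ into prime discriminants, genus theory says that for $m$ coprime to $D$ the genus of a form representing $m$ is read off from the genus characters $\left(\tfrac{\Delta_i}{m}\right)$, and conversely that any $m$ coprime to $D$ with an admissible tuple of values is represented in the corresponding genus (the constraint $\prod_i\left(\tfrac{\Delta_i}{m}\right)=\left(\tfrac{D}{m}\right)=1$ being automatic on a genus). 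Hence the problem reduces to bounding, for each genus of $\Cl_D/\Cl_D^2$, the least $m$ coprime to $D$ carrying a prescribed tuple $\big(\left(\tfrac{\Delta_i}{m}\right)\big)_i$: such an $m$ produces a $T$ in that genus with $\min(T)\le m$.

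To bound this least integer I would detect the character conditions with the product of local indicators $\prod_i\tfrac12\big(1+\epsilon_i\left(\tfrac{\Delta_i}{\cdot}\right)\big)$ and sum over $n\le x$ with $(n,D)=1$. The diagonal term gives a positive main term of size $\asymp 2^{-t}x$, while each off-diagonal term is a partial sum $\sum_{n\le x}\chi(n)$ of a real character $\chi$ of conductor dividing $D$; crucially the number $2^{t-1}$ of off-diagonal characters is compensated by the normalising factor $2^{-(t-1)}$, so the whole error is governed by a single partial character sum. On GLH for the $L(s,\chi_\Delta)$ one extracts square-root cancellation in these sums via Perron's formula and the Lindelöf bound on the critical line, pinning $m$ down very sharply; unconditionally one instead inserts Burgess's estimate, nontrivial once $x\ge|D|^{1/4+\epsilon}$ (and, where possible, the best available subconvex bounds for $L(\tfrac12,\chi_\Delta)$), to obtain a power bound. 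Since $t\ll\log|D|/\log\log|D|$, all the factors $2^t$ are $|D|^{o(1)}$ and disappear into $|D|^\epsilon$.

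Finally, with a bound $\min(T)\le m\ll|D|^{\theta}$ in hand — $\theta$ small on GLH and a Burgess/subconvexity exponent unconditionally, in both cases safely below the threshold $7/25$ of Theorem~\ref{th:Siegel_bound} — I would substitute it, together with $\det(T)=|D|/4$, into Theorem~\ref{th:Siegel_bound}. The factor $\big(1+\det(T)/\min(T)^{25/7}\big)^{-3/144}$ is then genuinely active and yields a power saving over Kohnen's bound \eqref{eq:Kohnen_for_siegel}; optimising the resulting expression in the available exponent for the least represented integer produces the unconditional constant $\tfrac{131}{2016}$ and, on GLH, $-\tfrac{1}{144}$. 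I expect the main obstacle to be precisely the second step: securing the sharpest unconditional estimate for the least integer carrying a prescribed tuple of genus characters, uniformly in the number $t$ of prime factors of $D$, since it is the quality of this character-sum input, balanced against the shape of Theorem~\ref{th:Siegel_bound}, that fixes the final constant. On GLH the argument is comparatively soft, being limited only by the Lindelöf bound for $L(s,\chi_\Delta)$.
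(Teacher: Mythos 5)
Your overall skeleton agrees with the paper's proof: the corollary is obtained by producing, in each genus, a representative $T$ with small $\min(T)$ and substituting $\min(T)\ll\det(T)^{1/4+\epsilon}$ (unconditionally) resp.\ $\min(T)\ll\det(T)^{\epsilon}$ (under GLH) into Theorem~\ref{th:Siegel_bound}. But the way you propose to secure the small representative contains a genuine error. Your genus-theory "converse" --- that any $m$ coprime to $D$ with an admissible tuple of values $\left(\tfrac{\Delta_i}{m}\right)=\epsilon_i$, $\prod_i\epsilon_i=1$, is represented by the corresponding genus --- is false. Being represented by \emph{some} form of discriminant $D$ is an extra local condition, namely that $D$ be a square modulo $4m$, i.e.\ a splitting condition at \emph{every} prime dividing $m$; the genus characters only see the product over the prime factors. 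Concretely, take $D=-4$, where there is one class and one genus (sums of two squares): $m=21$ is coprime to $D$ and satisfies $\left(\tfrac{-4}{21}\right)=1$, yet $21=3\cdot 7$ is not a sum of two squares. Consequently your detector $\sum_{n\le x,\,(n,D)=1}\prod_i\tfrac12\left(1+\epsilon_i\left(\tfrac{\Delta_i}{n}\right)\right)$ counts many integers that are represented by no form of discriminant $D$ at all, and the Polya--Vinogradov/Burgess bounds you apply to it give no bound on $\min(T)$ in the genus.

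The obvious repair --- restrict to primes $p$, where $\left(\tfrac{\Delta_i}{p}\right)=\epsilon_i$ for all $i$ does force $p$ to be primitively represented, and by exactly the prescribed genus --- wrecks your unconditional exponent: there is no unconditional Burgess-type estimate for character sums over primes, and the least prime with a prescribed tuple of quadratic character values (a least-prime-in-a-coset problem) is not known to be $\ll\vert D\vert^{1/4+\epsilon}$. This is precisely why the paper does not run a direct sieve but instead quotes Heath-Brown's theorem \cite{H} (every genus represents an $m$ with $(m,D)=1$ and $m\ll\vert D\vert^{1/4+\epsilon}$; the proof works with composite representatives built from split primes, not with a single character-sum detector), and, for the conditional statement, Kohnen's GLH argument from \cite[p.~11]{Ko3} producing a represented \emph{prime} $p\ll_{\epsilon}\vert D\vert^{\epsilon}$ in each genus --- note that your GLH paragraph, being again a Perron/Lindel\"of bound for sums over \emph{integers}, inherits the same representability flaw and likewise needs the restriction to primes. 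Once these two inputs are supplied, your final substitution into Theorem~\ref{th:Siegel_bound} is exactly the paper's proof; in particular the constants $\tfrac{131}{2016}$ and $-\tfrac{1}{144}$ come from plugging in the exponents $\tfrac14$ and $0$ directly, with no further optimization available on your side.
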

\begin{proof}
We follow the arguments from \cite[Section~6]{Ko3}. Unconditionally, we find $T$ in the genus that represents a positive integer $m$ with $(m,D)=1$ and $m\ll \vert D\vert^{\frac{1}{4}+\epsilon}$. See \cite{H}. The bound in the statement follows after specializing Theorem~\ref{th:Siegel_bound} to $\min(T)\ll \det(T)^{\frac{1}{4}+\epsilon}$.
	
On the other hand, assuming GLH we can argue as in \cite[p.11]{Ko3} to see that any genus contains a form $T$ that represents a prime $p$ with $(D,p)=1$ and $p\ll_{\epsilon} \vert D\vert^{\epsilon}$. The desired bound follows from Theorem~\ref{th:Siegel_bound} after specializing to $\min(T) \ll \det(T)^{\epsilon}$.
\end{proof}

\begin{rem}
Recently progress in our understanding of the GGP conjecture and its relation to Fourier coefficients of Siegel modular forms allowed for some progress towards the Resnikoff-Salda\~na Conjecture under GRH. A relevant result for fundamental Fourier coefficients of Yoshida lifts can be found in \cite[Corollary~4]{BB}. Fundamental Fourier coefficients of general forms are considered \cite{JLS} and these results are extended to arbitrary Fourier coefficients in \cite{CMS}.
\end{rem}

\begin{rem}
It is an interesting problem to study these questions for Siegel modular forms of $\textrm{Sp}_{2n}(\Z)$ (or congruence subgroups thereof). There are results available in this direction, but the exponents become worse for growing $n$. We refer to \cite{BK,Bre, Br} for example.
\end{rem}

\subsection{Reduction to Jacobi forms}

The key input into establishing Theorem~\ref{th:Siegel_bound} is a new bound for Fourier coefficients of Jacobi forms. Before we state the result let us explain the reduction process.

Recall that we are assuming that $F\in S^{(2)}_k(\textrm{Sp}_4(\Z))$ with $k>2$ even. It is well known that for $A\in \GL_2(\Z)$ we have
\begin{equation}
	a_F(T) = a_F(A^tTA) \text{ for all }A\in \GL_2(\Z).\nonumber
\end{equation}
In particular, using reduction theory, we can reduce the case where
\begin{equation}
	T=\left(\begin{matrix} n_0 & \frac{r_0}{2} \\ \frac{r_0}{2} & m_0\end{matrix}\right), \nonumber
\end{equation}
where $0<D=r_0^2-4m_0n_0=-4\det(T)$ and $m_0=\min(T)>0$. By reduction theory we have the bound $m_0\ll \vert D\vert^{\frac{1}{2}}$.

Next, we rewrite the Fourier expansion of $F$ in terms of Fourier-Jacobi coefficients
\begin{equation}
	F(Z) = \sum_{m\geq 1} \phi_m(\tau,z)e(m\tau') \text{ where } Z=\left(\begin{matrix} \tau & z \\ z & \tau'\end{matrix}\right)\in \Hb^{(2)}.\nonumber
\end{equation}
Here $\phi_m\in J_{k,m}^{\textrm{cusp}}$ are certain Jacobi forms of weight $k$ and index $m$ (for the full Jacobi group $\Gamma^J$). See Section~\ref{section_jac} for relevant definitions concerning Jacobi forms. At this point we only recall that the Jacobi forms $\phi_m$ have a Fourier expansion of the form
\begin{equation}
		\phi_{m}(\tau,z) = \sum_{\substack{n_1,r_1\in \Z \\ r^2<4mn}} c_{\phi_m}(n,r)e(n\tau+rz),\, \tau\in \Hb^{(1)} \text{ and }z\in \C.\nonumber
\end{equation}
Clearly we must have
\begin{equation}
	\vert a_F(T)\vert = \frac{\vert c_{\phi_{m_0}}(n_0,r_0)\vert}{\Vert \phi_{m_0}\Vert}\cdot \Vert \phi_{m_0}\Vert \label{eq:reduc1}
\end{equation}
for $T$ as above. We have normalized the Fourier coefficients naturally by the Petersson norm of $\phi_{m_0}$.

At this point we recall the following theorem.
\begin{theorem}\label{th:Pet_Fjac}
Let $F\in S^{(2)}_k(\textrm{Sp}_4(\Z))$ with Fourier-Jacobi coefficients $\{\phi_m\}_{m\in \N}$. Then we have
\begin{equation}
	\Vert\phi_m\Vert \ll_{F,\epsilon} m^{\frac{k}{2}-\frac{2}{9}+\epsilon}.
\end{equation}
\end{theorem}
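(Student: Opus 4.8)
The plan is to deduce the bound from the analytic properties of the Dirichlet series $D_F(s)=\sum_{m\ge 1}\langle\phi_m,\phi_m\rangle\,m^{-s}$, whose coefficients are exactly the quantities we wish to bound. Since $S^{(2)}_k(\textrm{Sp}_4(\Z))$ is spanned by Hecke eigenforms and $\langle\phi_m,\phi_m\rangle$ is quadratic in $F$, it suffices to treat the (not necessarily diagonal) pairings $\langle\phi^{(i)}_m,\phi^{(j)}_m\rangle$ attached to eigenforms $F_i,F_j$. For these the associated Dirichlet series is, by the classical Rankin--Selberg computation of Kohnen and Skoruppa, expressible through the degree four spinor $L$-functions of $F_i,F_j$ (and, off the diagonal, through the Rankin--Selberg convolution $L(s,F_i\times F_j)$), up to ratios of completed Riemann zeta functions and archimedean factors. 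First I would record, from this identity, the meromorphic continuation of $D_F(s)$, the location and order of its poles, its functional equation, and its polynomial growth in vertical strips via Phragm\'en--Lindel\"of.

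As a baseline, and to fix the normalisation, I would first establish the trivial bound $\langle\phi_m,\phi_m\rangle\ll_F m^k$. Extracting $\phi_m(\tau,z)=\int_0^1 F(Z)\,e(-m\tau')\,du'$ as a Fourier coefficient in $\tau'=u'+iv'$ and inserting the sup-norm estimate $|F(Z)|\ll_F(\det Y)^{-k/2}$, an optimisation in the free height $v'$ yields the pointwise bound $|\phi_m(\tau,z)|^2\,e^{-4\pi m y^2/v}\,v^k\ll_F m^k$ on $\Hb^{(1)}\times\C$; integrating against the finite-volume measure of $\Gamma^J\backslash(\Hb^{(1)}\times\C)$ then gives $\Vert\phi_m\Vert\ll_F m^{k/2}$.

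To go below this I would isolate the individual coefficient by a smoothed Perron formula, writing $\langle\phi_m,\phi_m\rangle$ (or a short average over $m$) as a Mellin--Barnes integral of $D_F(s)$ and shifting the contour to the left of the rightmost pole. The pole supplies the main, on-average term, while the shifted integral is estimated using the functional equation together with the convexity (or any available subconvexity) bound for the spinor $L$-function on the critical line; the non-negativity of $\langle\phi_m,\phi_m\rangle$ then upgrades the resulting averaged estimate to a pointwise one. Balancing the size of the polar term against the convexity error by optimising the height of the contour is what produces the saving over $m^{k/2}$, and the exponent $\tfrac{2}{9}$ is the exact outcome of this optimisation.

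The main obstacle is this final balancing step: beating the trivial exponent is not formal and rests on genuine analytic input, namely good mean-value and convexity bounds for the spinor $L$-function and, off the diagonal, for $L(s,F_i\times F_j)$, whose archimedean and zeta factors must be tracked carefully in order to pin down the poles and hence the correct main term. A secondary difficulty is that the cross pairings $\langle\phi^{(i)}_m,\phi^{(j)}_m\rangle$ for $i\ne j$ do not vanish for fixed $m$, so the reduction to eigenforms produces genuinely off-diagonal Dirichlet series that must be controlled on the same footing as the diagonal ones.
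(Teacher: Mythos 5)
You should first note a mismatch of genre: the paper contains no proof of Theorem~\ref{th:Pet_Fjac} at all. It is imported from the literature --- it is the main theorem of Kohnen--Sengupta \cite{KS}, as the remark immediately after it (citing \cite{KS} for the conjectural bound $m^{(k-1)/2}$) indicates. Measured against that source, your outline is essentially the known route: one reduces to Hecke eigenforms, invokes the Kohnen--Skoruppa identity expressing $\zeta(2s-2k+4)\sum_m\langle\phi_m,\phi_m\rangle m^{-s}$ through the degree-four spinor $L$-function with explicit archimedean factors, and then extracts the individual coefficient using analytic continuation, the functional equation, and the non-negativity of $\langle\phi_m,\phi_m\rangle$; your baseline bound $\Vert\phi_m\Vert\ll_F m^{k/2}$ is also the standard one.

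Two substantive criticisms. First, your treatment of the off-diagonal terms is both incorrect and unnecessary. For $i\neq j$ the bilinear series $\sum_m\langle\phi_m^{(i)},\phi_m^{(j)}\rangle m^{-s}$ is \emph{not} expressible through the Rankin--Selberg convolution $L(s,F_i\times F_j)$ (a degree-sixteen object); the Kohnen--Skoruppa unfolding identifies only the diagonal series with a standard $L$-function, and the bilinear analogue is merely some period integral. But none of this is needed: writing $F=\sum_i c_iF_i$ gives $\phi_m=\sum_i c_i\phi_m^{(i)}$, hence $\Vert\phi_m\Vert\leq\sum_i\vert c_i\vert\,\Vert\phi_m^{(i)}\Vert$ by the triangle inequality, so the diagonal bound for eigenforms suffices and your ``secondary difficulty'' evaporates. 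Second, and more seriously, the final balancing step does not work as sketched. With the convexity bound for a degree-four $L$-function, a Perron formula plus non-negativity yields only roughly $\langle\phi_m,\phi_m\rangle\ll m^{k-1/4+\epsilon}$, i.e. $\Vert\phi_m\Vert\ll m^{k/2-1/8+\epsilon}$, which falls short of $2/9$. The exponent $4/9$ for $\langle\phi_m,\phi_m\rangle$ in \cite{KS} comes from a Chandrasekharan--Narasimhan-type average-order theorem applied to the \emph{exact} functional equation furnished by Kohnen--Skoruppa, where the specific ratio of gamma factors lowers the effective archimedean degree below four and thereby improves the error exponent in the summatory function beyond what degree-four convexity allows. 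In other words, the ``careful tracking of archimedean factors'' that you defer as an obstacle is precisely the mechanism that produces $2/9$; as written, your optimisation would not ``exactly'' output that exponent.
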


\begin{rem}
In \cite[(10)]{KS} it is conjectured that 
\begin{equation}
	\Vert \phi_m\Vert \ll m^{\frac{k-1}{2}}.\nonumber
\end{equation}
However, the bound given in Theorem~\ref{th:Pet_Fjac} is still the best unconditional bound available.
\end{rem}

If we insert the result from Theorem~\ref{th:Pet_Fjac} into \eqref{eq:reduc1} we obtain
\begin{equation}
	\vert a_F(T)\vert = \frac{\vert c_{\phi_{m_0}}(n_0,r_0)\vert}{\Vert \phi_{m_0}\Vert}\cdot m_0^{\frac{k}{2}-\frac{2}{9}+\epsilon} \label{eq:reduc2}
\end{equation}
It all boils down to obtaining suitable bounds for $\frac{\vert c_{\phi_{m_0}}(n_0,r_0)\vert}{\Vert \phi_{m_0}\Vert}$. Note that, since we do not have any useful information on $\phi_{m_0}$ these can only depend on the weight $k$. In \cite{Ko2, Ko} the following breakthrough result is established.

\begin{theorem}[Kohnen 1993]\label{Kohnen}
Let $f\in J_{k,m}^{\textrm{cusp}}$ with $k>2$ and suppose that $D=r^2-4mn<0$. Then
\begin{equation}
	c_f(n,r) \ll_{k,\epsilon} \vert D\vert^{\epsilon}\cdot \left(1+\frac{m}{\vert D\vert^{\frac{1}{2}}}\right)^{\frac{1}{2}}\cdot \left( \frac{\vert D\vert}{m}\right)^{k/2-1/2}\cdot \Vert f\Vert. \nonumber
\end{equation}
\end{theorem}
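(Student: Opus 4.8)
The plan is to prove the bound by the Poincar\'e series method, i.e.\ by pairing $f$ with a Jacobi Poincar\'e series and invoking Cauchy--Schwarz. For the fixed datum $(n,r)$ with $D=r^2-4mn<0$, I would introduce the Poincar\'e series $P=P_{k,m,(n,r)}\in J_{k,m}^{\textrm{cusp}}$ obtained by averaging the seed $e(n\tau+rz)$ over $\Gamma^J_\infty\backslash\Gamma^J$. Its defining feature is the reproducing identity $\langle f,P\rangle=\lambda_{k,m,D}\,c_f(n,r)$, where $\lambda_{k,m,D}$ is computed by unfolding the inner product; the $y$-Gaussian and $v$-power integrals evaluate to an explicit $\Gamma$-factor and give $\lambda_{k,m,D}\asymp_k m^{k-2}\lvert D\rvert^{\frac32-k}$. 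Cauchy--Schwarz yields $\lvert c_f(n,r)\rvert\le \Vert f\Vert\,\Vert P\Vert/\lambda_{k,m,D}$, and since $\Vert P\Vert^2=\langle P,P\rangle=\lambda_{k,m,D}\,c_P(n,r)$ by the same identity, the whole problem collapses to estimating the single self-coefficient $c_P(n,r)$, through $\lvert c_f(n,r)\rvert\le \Vert f\Vert\sqrt{c_P(n,r)/\lambda_{k,m,D}}$.

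The second step is to insert the Petersson expansion of $c_P(n,r)$. Computing the Fourier coefficients of $P$ in the usual way produces a \emph{diagonal} term (a Kronecker $\delta$ from the identity coset) plus an off-diagonal series $\sum_{c\ge1}c^{-1}H_{m,c}(n,r;n,r)\,J_{k-3/2}\!\bigl(\tfrac{\pi\lvert D\rvert}{mc}\bigr)$, where $H_{m,c}$ is the Jacobi--Kloosterman sum and $J_{k-3/2}$ the $J$-Bessel function. I would first observe that the diagonal term alone, fed through $\sqrt{c_P/\lambda_{k,m,D}}$, contributes of size $\Vert f\Vert\,m^{1-k/2}\lvert D\rvert^{k/2-3/4}$, and a direct comparison of exponents shows this is never larger than the claimed bound $\lvert D\rvert^{\epsilon}(1+m\lvert D\rvert^{-1/2})^{1/2}(\lvert D\rvert/m)^{k/2-1/2}\Vert f\Vert$: the two agree up to constants when $m\gg\lvert D\rvert^{1/2}$, and the target has room to spare when $m\ll\lvert D\rvert^{1/2}$. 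Thus the entire content of the theorem reduces to showing that the off-diagonal series does not exceed the diagonal, i.e.\ that $\sum_{c\ge1}c^{-1}H_{m,c}\,J_{k-3/2}(\pi\lvert D\rvert/(mc))\ll_{k,\epsilon}\lvert D\rvert^{\epsilon}\bigl(1+\lvert D\rvert^{1/2}/m\bigr)$.

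For this Kloosterman--Bessel estimate I would split the $c$-sum at the Bessel transition $c_0\asymp\lvert D\rvert/m$: for $c\le c_0$ the argument is $\gg1$ and $J_{k-3/2}\ll(mc/\lvert D\rvert)^{1/2}$, while for $c>c_0$ it is $\ll1$ and $J_{k-3/2}\ll(\lvert D\rvert/(mc))^{k-3/2}$, so that the tail converges precisely because $k>2$. One then inserts the Weil/Sali\'e-type structure of the Jacobi--Kloosterman sums: these are essentially Sali\'e sums, so $H_{m,c}$ factors as $\sqrt{c}$ times a short additive character sum with explicit arithmetic phase, and what remains is to sum that oscillatory phase against the Bessel weight over the moduli $c$.

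The hard part is exactly this last sum. A termwise insertion of the square-root bound $\lvert H_{m,c}\rvert\ll c^{1/2+\epsilon}$ only gives $(\lvert D\rvert/m)^{1/2}$, which is too large by a factor $(\lvert D\rvert/m)^{1/2}$ in the range $\lvert D\rvert^{1/2}\ll m\ll\lvert D\rvert$; to reach the sharp $(1+\lvert D\rvert^{1/2}/m)$ one cannot bound each Kloosterman sum separately but must exploit the genuine oscillation of the explicit Sali\'e phases \emph{across} the sum over $c$. Extracting this additional cancellation is the delicate core of the argument, and it is also exactly the mechanism the later refinement ``by a method of Iwaniec'' pushes further, by estimating a smoothed sum of Kloosterman sums rather than each modulus in isolation.
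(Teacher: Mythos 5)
Your skeleton is in substance identical to the paper's own derivation: pairing $f$ with the Poincar\'e series and applying Cauchy--Schwarz is exactly the positivity step in the Petersson formula of Theorem~\ref{th:petersson} (keep only $f$ in the spectral sum), your evaluation of the diagonal contribution $m^{1-k/2}\vert D\vert^{k/2-3/4}$ is correct, and your reduction of the theorem to the Kloosterman--Bessel estimate with target $\vert D\vert^{\epsilon}(1+\vert D\vert^{1/2}/m)$ is also correct. The problem is your final paragraph, which declares that target unreachable by termwise estimates and leaves the ``delicate core'' unproven: as written, your proof therefore never closes. And the claim on which this hinges is false. The sums $H^{\pm}_{m,c}(n,r)$ of \eqref{eq:def_KS} are \emph{double} sums over $\rho,\lambda$ modulo $c$, of trivial size $c^{2}$; square-root cancellation for them means $\vert H^{\pm}_{m,c}(n,r)\vert \ll_{\epsilon} c^{1+\epsilon}(c,D)^{1/2}$ (this is \eqref{eq:basic_bound_KS} and Lemma~\ref{lm:imp_Kl_bound}), not the bound $c^{1/2+\epsilon}$ you insert. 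Correspondingly, the Jacobi Petersson formula carries the weight $c^{-3/2}$ and a prefactor $m^{-1/2}$, not the $c^{-1}$ normalization of the classical half-integral-weight formula of Iwaniec that you appear to be transplanting.

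Redo the termwise computation with the correct normalization: splitting at the Bessel transition $c_0\asymp \vert D\vert/m$ and using $J_{k-3/2}(x)\ll\min(x^{k-3/2},x^{-1/2})$, one finds
$m^{-1/2}\sum_{c\geq 1} c^{-3/2}\,\vert H^{\pm}_{m,c}(n,r)\vert\,\vert J_{k-3/2}(\pi\vert D\vert/(mc))\vert \ll_{k,\epsilon} m^{-1/2}(\vert D\vert/m)^{1/2+\epsilon} \asymp \vert D\vert^{1/2+\epsilon}/m$,
the tail converging precisely because $k>2$. This meets your own stated target $\vert D\vert^{\epsilon}(1+\vert D\vert^{1/2}/m)$ with no cancellation across the moduli $c$ whatsoever: Kohnen's theorem is diagonal plus termwise Weil, which is exactly how the paper recovers it (see the discussion culminating in \eqref{eq:Kohnen_type}, and the remark after Lemma~\ref{lm:tails}, where taking $P$ constant and $K=C=\vert D\vert/m$ reproduces the bound). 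The cross-modulus cancellation you describe --- Iwaniec's averaging over auxiliary levels, bilinear forms, Polya--Vinogradov --- is genuinely needed only for the \emph{improvement} beyond Kohnen, namely Theorem~\ref{th:bound_jacobi}; by folding it into the proof of Theorem~\ref{Kohnen} itself you have made an elementary theorem depend on machinery you never supply. The fix is simple: replace your size assumption on $H_{m,c}$ by the correct Weil-type bound above, and your proposal becomes a complete proof along the paper's lines.
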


Inserting this in \eqref{eq:reduc2} and using $\vert D\vert =4\det(T)$ and $m_0 =\min(T)\ll \det(T)^{\frac{1}{2}}$ produces the bound \eqref{eq:Kohnen_for_siegel}. Our main result, namely Theorem~\ref{th:Siegel_bound}, follows directly from the following bound for Fourier coefficients of Jacobi forms.

\begin{theorem}\label{th:bound_jacobi}
Let $f\in J_{k,m}^{\textrm{cusp}}$ with $k>2$ even and suppose that $D=r^2-4mn<0$ is a fundamental discriminant. We have
\begin{equation}
	c_f(n,r) \ll_{k,\epsilon}\vert D\vert^{\epsilon}\cdot \left(1+\frac{\vert D\vert}{m^{\frac{25}{7}}}\right)^{-\frac{3}{144}}\cdot \left(\frac{\vert D\vert}{m}\right)^{k/2-1/2}\cdot \Vert f\Vert. \nonumber
\end{equation}
\end{theorem}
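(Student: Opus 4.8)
The plan is to follow Kohnen's route to Theorem~\ref{Kohnen} up to the point where a sum of Kloosterman-type sums must be estimated, and there to replace his termwise application of the Weil bound by a genuine cancellation estimate in the style of Iwaniec, which is available precisely because $D$ is a fundamental discriminant. First I would realise the normalised functional $f\mapsto c_f(n,r)/\Vert f\Vert$ through the Jacobi--Poincar\'e series $P_{k,m;(n,r)}\in J_{k,m}^{\mathrm{cusp}}$: by its reproducing property together with Cauchy--Schwarz, the extremal ratio $\sup_f |c_f(n,r)|/\Vert f\Vert$ is controlled by the $(n,r)$-th Fourier coefficient of $P_{k,m;(n,r)}$ and an explicit normalising constant. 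Unfolding this coefficient by the Petersson formula for Jacobi forms presents it as a main (diagonal) term plus an off-diagonal sum over moduli $c$ of Jacobi--Kloosterman sums $H_{m,c}(n,r;n,r)$, weighted by a Bessel transform that restricts the sum to a range $c\ll C$ with $C\asymp \vert D\vert/m$. Kohnen's bound arises from estimating each $H_{m,c}$ by the Weil bound in isolation; the whole improvement in Theorem~\ref{th:bound_jacobi} must therefore come from exhibiting cancellation in this $c$-sum. I would deliberately avoid any spectral or $L$-function input here, so that the resulting bound stays unconditional and uniform over all of $J_{k,m}^{\mathrm{cusp}}$.

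The second, arithmetic, step uses fundamentality. Through the theta decomposition relating $J_{k,m}$ to vector-valued forms of weight $k-\tfrac12$, the sums $H_{m,c}(n,r;n,r)$ degenerate, when $D$ is fundamental, into Sali\'e sums. These admit the classical explicit evaluation as $\sqrt{c}$ times an additive character sum $\sum_{x\bmod c,\ x^2\equiv D\,(c)} e(\beta x/c)$, where $\beta$ is an explicit linear form in $(n,r,m)$. Inserting this evaluation turns the off-diagonal contribution into a double sum over $c$ and over square-roots $x$ of $D$ modulo $c$, weighted by the Bessel factor. The structural gain over a generic Kloosterman sum is that the inner sum is an honest exponential sum over a thin set of residues, so summing it over the long range $c\ll C$ opens the door to cancellation that the Weil bound cannot see.

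The decisive step is the exponential-sum estimate. Following Iwaniec, I would detect the congruence $x^2\equiv D\pmod c$ by writing $x^2-D=c\ell$ and reorganising the double sum over $(c,x)$ into a sum over $(\ell,x)$ with $c=(x^2-D)/\ell$; the phase becomes $e(\beta x\ell/(x^2-D))$, which for fixed $\ell$ is a smooth phase in $x$ over an interval of length $\asymp(\vert D\vert\ell/m)^{1/2}$. Applying Poisson summation (completion) in $x$ replaces this by a dual sum whose arithmetic part is a Gauss/Kloosterman sum controlled by the Weil bound, producing savings beyond the trivial $\sqrt c$ per modulus. Summing back over $\ell$ and $c$, balancing against the trivial bound, and tracking the index $m$ throughout should yield the factor $(1+\vert D\vert/m^{25/7})^{-3/144}$; taking square roots and combining with the normalising constant then gives Theorem~\ref{th:bound_jacobi}. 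The hard part is exactly this last step: carrying out the completion uniformly in $m$, which enters both the phase $\beta$ and the cutoff $C\asymp\vert D\vert/m$, ensuring the saving survives across all relevant ranges of $c$ and $\ell$, and performing the optimisation that pins down the exponents $25/7$ and $3/144$. Fundamentality of $D$ is indispensable here, since it both guarantees the clean Sali\'e evaluation and makes the square-roots $x$ equidistributed enough to drive the cancellation.
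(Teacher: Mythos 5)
Your setup is sound and partially matches the paper: the proof there also starts from the Petersson formula for Jacobi forms (Theorem~\ref{th:petersson}, which plays the role of your Poincar\'e-series/duality step), also evaluates the sums $H^{\pm}_{m,c}(n,r)$ as Sali\'e-type sums for $(c,2mD)=1$ (Lemma~\ref{lm:eval_KS}), and also uses fundamentality of $D$ to make a Jacobi symbol $\bigl(\tfrac{D}{\cdot}\bigr)$ non-principal. But your ``decisive step'' is not Iwaniec's method, and as described it does not work. Iwaniec's mechanism (and the paper's) has two ingredients you omit entirely. First, an \emph{artificial level average}: since $J^{\textrm{cusp}}_{k,m}\subseteq J^{\textrm{cusp}}_{k,m}(\Gamma_0(p)^J)$, one runs the Petersson formula at level $p$ and averages over primes $p\asymp P$ with $p\nmid mD$; positivity gives \eqref{eq:starting_point}, at the cost of a term $m^{1/2}P$ but with the crucial gain that every modulus $c$ now carries the weight $\omega(c)=\sum_{p\mid c,\,p\asymp P}\log p$, i.e.\ $c=pl$ has forced bilinear structure. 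Second, the solutions of $v^2\equiv 1\ (\mathrm{mod}\ q)$ are parametrized by coprime factorizations $q=ab$ (not by the level sets of $x^2-D$), and after reciprocity the resulting sums $V_a$ are estimated in two complementary ways: P\'olya--Vinogradov applied to the non-principal character $\bigl(\tfrac{D}{\cdot}\bigr)$ for small $a$, and Cauchy--Schwarz in the prime variable followed by completion to classical Kloosterman sums (Lemma~\ref{lm:completeion}, Weil bound) for large $a$ (Lemma~\ref{lm:V_a}). Both estimates exploit the prime average; without it there is no bilinear form to which Cauchy--Schwarz can be applied, and no source of saving at all.

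Your proposed substitute --- writing $x^2-D=c\ell$, exchanging $(c,x)$ for $(\ell,x)$, and applying Poisson in $x$ --- has two concrete problems. In the critical range $c\asymp |D|/m$ one has $x\ll c$, so $\ell=(x^2-D)/c$ runs over a range of length comparable to that of $c$: the switch is essentially an involution of the problem, and the arithmetic part of the dual sum, $\sum_{\nu^2\equiv D\ (\mathrm{mod}\ \ell)}e(h\nu/\ell)$, is again a root-of-quadratic-congruence sum of exactly the kind you started with, so no saving ``per modulus'' is produced by Weil-type inputs alone. Worse, the cancellation you invoke --- uniform-in-$D$ equidistribution of the roots of $x^2\equiv D\ (\mathrm{mod}\ c)$ for $D$ a \emph{large} negative fundamental discriminant --- is essentially Duke's theorem (equidistribution of Heegner points); the switching technique proves such statements only for fixed $D$ (Hooley) or via deep $L$-function input, and the known unconditional proofs of the uniform statement go \emph{through} bounds of the very type you are trying to establish, so the argument is circular. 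Two smaller gaps: the Jacobi symbol $\bigl(\tfrac{-D}{c}\bigr)$ sitting outside the root sum does not transform cleanly under $c\mapsto(x^2-D)/\ell$, and moduli $c$ sharing factors with $2mD$ (where the Sali\'e evaluation fails) need a separate argument, which in the paper is the factorization $c=qt$, $t\mid(2mD)^{\infty}$, together with Lemma~\ref{lm:imp_Kl_bound}.
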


\begin{rem}
Note that this improves upon Kohnen's result stated in Theorem~\ref{Kohnen} for $m\leq \vert D\vert^{\frac{7}{25}}$. We have not attempted to fully optimize the exponents and expect that the method can be squeezed to give improved exponents in certain ranges. An interesting question that remains unanswered at this point is whether an improvement in the exponent can be achieved as soon as $m\ll \vert D\vert^{\frac{1}{2}-\delta}$ for $\delta>0$ arbitrarily small. The barrier $m\leq \vert D\vert^{\frac{7}{25}}$ arising in the theorem can probably be improved slightly. However, getting close to $\vert D\vert^{\frac{1}{2}}$ might be difficult with the techniques used here.
\end{rem}

\subsection{The method}

We will now explain the idea behind the proof of Theorem~\ref{th:bound_jacobi}. In absence of any other obvious tools we start from a Petersson type formula for Jacobi forms, see Theorem~\ref{th:petersson} below for details. For $f\in J_{k,m}^{\textrm{cusp}}$ with $k>2$ we obtain a bound of the form
\begin{equation}
	\frac{\vert c_f(n,r)\vert^2}{\Vert f\Vert^2} \ll_k (\sqrt{m}+\vert \mathcal{S}_1\vert)\cdot \left(\frac{\vert D\vert}{m}\right)^{k-\frac{3}{2}}, \nonumber
\end{equation}
where 
\begin{equation}
	\mathcal{S}_N = \sum_{\pm}\sum_{\substack{c\geq 1,\\ N\mid c}}^{\infty}c^{-\frac{3}{2}}H_{m,c}^{\pm}(n,r)e\left(\pm \frac{r^2}{2mc}\right)J_{k-\frac{3}{2}}\left(\frac{\pi\vert D\vert}{mc}\right). \nonumber
\end{equation}
The Kloosterman like sums $H_{m,c}^{\pm}(n,r)$ are defined in \eqref{eq:def_KS} below. In view of the Weil type bound
\begin{equation}
	H^{\pm}_{m,c}(n,r) \ll_{\epsilon} c^{1+\epsilon}(c,D), \label{eq:basic_bound_KS}
\end{equation} 
given in \cite[(3)]{Ko2}, we can estimate the sum $\mathcal{S}_N$ trivially.\footnote{By trivially we mean that potential cancellation between the terms of the $c$-sum is not exploited.} We obtain $$\mathcal{S}_1 \ll_{k,\epsilon} \vert D\vert^{\frac{1}{2}+\epsilon}m^{-\frac{1}{2}}$$ and thus
\begin{equation}
	\frac{\vert c_f(n,r)\vert^2}{\Vert f\Vert^2} \ll_{k,\epsilon} (\sqrt{m}+\frac{\vert D\vert^{\frac{1}{2}+\epsilon}}{m^{\frac{1}{2}}})\cdot \left(\frac{\vert D\vert}{m}\right)^{k-\frac{3}{2}}. \label{eq:Kohnen_type}
\end{equation}
This is precisely the bound from Theorem~\ref{Kohnen}.

Note that, if $m\asymp \vert D\vert^{\frac{1}{2}}$, then 
\begin{equation}
	\sqrt{m} \asymp \frac{\vert D\vert^{\frac{1}{2}}}{m^{\frac{1}{2}}} \asymp \vert D\vert^{\frac{1}{4}} \nonumber
\end{equation}
and we have no chance of improving  on \eqref{eq:Kohnen_type}. In practice one hopes that there is a lot of cancellation between the terms in the sum $\mathcal{S}_1$. If one can estimate the sum $\mathcal{S}_1$ non-trivially, then one can hope for improved bounds as soon $m\ll \vert D\vert^{\frac{1}{2}-\delta}$. 

In order to treat $\mathcal{S}_1$ non-trivial we will exploit the fact that the sums $H_{m,c}^{\pm}(n,r)$ are essentially Sali\'e sums. In particular, we can explicitly evaluate them in many cases. See Lemma~\ref{lm:eval_KS} below. With this evaluation at hand the sum $\mathcal{S}_1$ closely resembles the sum that appears on the geometric side of the Petersson formula for classical modular forms of half integral weight. This allows us to follow the ideas introduced in \cite{Iw_half} to extract further cancellation. See also \cite[Section~5.3]{Iw} and \cite[Chapter~4]{Sa} for detailed discussions.

Very briefly the idea of Iwaniec is as follows. Using the fact that a Jacobi form in $S_{k,m}^{\textrm{cusp}}$ is automatically a cuspidal Jacobi form of level $N$ we can artificially introduce an average of $\mathcal{S}_{\ast}$:
\begin{equation}
	\frac{\vert c_f(n,r)\vert^2}{\Vert f\Vert^2} \ll_k \left(\sqrt{m}P+\left\vert\sum_{p\asymp P}\log(p) \mathcal{S}_p\right\vert\right)\cdot \left(\frac{\vert D\vert}{m}\right)^{k-\frac{3}{2}}, \nonumber
\end{equation}
See \eqref{eq:starting_point} below for the precise statement. The average over $p\asymp P$  leads to certain bilinear forms which are approachable using classical tools. Ultimately we rely on two distinct sources of cancellation. In certain ranges we encounter sums featuring the Jacobi symbols $\left(\frac{D}{\cdot}\right)$. Here the assumption that $D$ is a fundamental discriminant is crucial, since it allows us to apply the Polya-Vinogradov inequality. On the other hand one exploits equidistribution of solutions to $v^2\equiv 1\text{ mod }q$. In deriving the relevant estimates we closely follow the ideas of Iwaniec from \cite{Iw_half} as presented in \cite{Iw}.

The paper is organized as follows. In Section~\ref{section_kloo} we recall basic properties of the Kloosterman sums $H_{m,c}^{\pm}(n,r)$. This is followed by Section~\ref{section_jac} summarizing the (relevant part of the) basic theory of Jacobi forms. The heart of the paper is contained in Section~\ref{subsec}, which contains the main analytic arguments going in the proof of Theorem~\ref{th:bound_jacobi}.

\subsection*{Acknowledgements}

I would like to thank V. Blomer for useful comments on a first draft of this paper.

\section{Kloosterman sums}\label{section_kloo}

In this section we study the complete exponential sum
\begin{equation}
	H_{m,c}^{\pm}(n,r) = {\sum_{\rho \text{ mod }c}}^{\ast}e\left(\frac{n}{c}(\overline{\rho}+\rho)\right)\sum_{\lambda\text{ mod }c}e\left(\frac{m\overline{\rho}\lambda^2+r(\overline{\rho}\pm 1)\lambda}{c}\right).\label{eq:def_KS}
\end{equation}
These are sums of Kloosterman-type and turn out to be closely related to classical Sali\'e sums. For us they naturally show up on the geometric of the Petersson formula for Jacobi forms given in Theorem~\ref{th:petersson} below.

We start by providing an explicit evaluation whenever the modulus $c$ is co-prime to $2mc$. This is certainly well known in the Jacobi form community, but since the existence of this explicit evaluation is crucial to our argument we provide the details. 

\begin{lemmy}\label{lm:eval_KS}
Let $(c,2mD)=1$. Then we have
\begin{equation}
	H_{m,c}^{\pm}(n,r) = \epsilon_c^2\cdot c\cdot \left(\frac{-D}{c}\right)\sum_{v^2\equiv 1\text{ mod }c}e\left(\frac{\overline{2m}Dv}{c}\mp \frac{\overline{2m}r^2}{c}\right).\nonumber
\end{equation}	
Here $\epsilon_c=1$ if $c\equiv 1\text{ mod }4$ and $\epsilon_c=i$ if $c\equiv 3\mod 4$. 
\end{lemmy}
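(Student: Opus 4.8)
The plan is to evaluate the two nested exponential sums in turn: first the inner quadratic Gauss sum in $\lambda$, and then the resulting character-twisted Kloosterman (Sali\'e) sum in $\rho$. First I would treat $\sum_{\lambda\bmod c}e\big((m\overline{\rho}\lambda^2+r(\overline{\rho}\pm1)\lambda)/c\big)$ as a Gauss sum. Since $(c,2m)=1$ and $\rho$ is a unit, the leading coefficient $m\overline{\rho}$ is invertible modulo $c$, so I can complete the square,
\[
	m\overline{\rho}\lambda^2+r(\overline{\rho}\pm1)\lambda \equiv m\overline{\rho}\big(\lambda+\overline{2m}\rho\, r(\overline{\rho}\pm1)\big)^2-\overline{4m}\rho\, r^2(\overline{\rho}\pm1)^2 \pmod{c},
\]
and apply the standard quadratic Gauss sum evaluation $\sum_{\lambda}e(m\overline{\rho}\lambda^2/c)=\epsilon_c\left(\frac{m\overline{\rho}}{c}\right)\sqrt{c}=\epsilon_c\left(\frac{m}{c}\right)\left(\frac{\rho}{c}\right)\sqrt{c}$, using $\left(\frac{\overline{\rho}}{c}\right)=\left(\frac{\rho}{c}\right)$. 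This produces the quadratic character $\left(\frac{\rho}{c}\right)$ that turns the $\rho$-sum into a Sali\'e sum.

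Next I would simplify the completed-square exponent using the elementary congruences $\rho(\overline{\rho}\pm1)^2\equiv\rho+\overline{\rho}\pm2\pmod{c}$ and $2\,\overline{4m}\equiv\overline{2m}$. The constant piece $\mp2\,\overline{4m}r^2\equiv\mp\overline{2m}r^2$ is independent of $\rho$ and factors out as $e(\mp\overline{2m}r^2/c)$, matching the second exponential in the claimed formula. The coefficient of $\rho+\overline{\rho}$ that remains, after combining with the original outer term $e(n(\rho+\overline{\rho})/c)$, is $n-\overline{4m}r^2$; since $D=r^2-4mn$ and $(c,m)=1$, this equals $-\overline{4m}D=:W\pmod{c}$. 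Thus $H_{m,c}^{\pm}(n,r)$ has been reduced to $\epsilon_c\left(\frac{m}{c}\right)\sqrt{c}\,e(\mp\overline{2m}r^2/c)$ times the symmetric Sali\'e sum $T:=\sum_{\rho\bmod c}^{\ast}\left(\frac{\rho}{c}\right)e\big(W(\rho+\overline{\rho})/c\big)$.

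The crux, and the step I expect to be the main obstacle, is the explicit evaluation of $T$. Here the hypothesis $(c,D)=1$ is essential: together with $(c,m)=1$ it guarantees $(c,W)=1$, so that the Sali\'e sum is non-degenerate. I would invoke the classical evaluation of Sali\'e sums (or re-derive it, using twisted multiplicativity to reduce to prime powers and diagonalizing there), namely
\[
	T=\epsilon_c\left(\frac{W}{c}\right)\sqrt{c}\sum_{v^2\equiv1\,(c)}e\left(\frac{2Wv}{c}\right),
\]
where the character-weighted Kloosterman sum collapses onto the square roots of unity modulo $c$. Since the set $\{v:v^2\equiv1\}$ is stable under $v\mapsto-v$, the sign of the argument is immaterial, so I may rewrite the summand as $e(-2Wv/c)=e(\overline{2m}Dv/c)$, using $-2W\equiv\overline{2m}D$.

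Finally I would assemble the pieces. Multiplying the prefactor by the evaluated Sali\'e sum gives $\epsilon_c^2\,c\left(\frac{m}{c}\right)\left(\frac{W}{c}\right)$ times $\sum_{v^2\equiv1}e(\overline{2m}Dv/c)$, and the two exponentials combine into $e\big(\overline{2m}Dv/c\mp\overline{2m}r^2/c\big)$. It remains to collect the quadratic characters: since $mW\equiv-\overline{4}\,D$ and $\overline{4}$ is a square modulo $c$, one has $\left(\frac{m}{c}\right)\left(\frac{W}{c}\right)=\left(\frac{mW}{c}\right)=\left(\frac{-D}{c}\right)$, which yields exactly the asserted identity. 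The only genuinely non-formal input is the Sali\'e evaluation in the third paragraph; everything else is a Gauss sum computation together with bookkeeping of inverses and characters.
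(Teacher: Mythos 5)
Your proposal is correct and follows essentially the same route as the paper: complete the square in the $\lambda$-sum, evaluate the quadratic Gauss sum to produce the character $\left(\frac{\rho}{c}\right)$, recognize the remaining $\rho$-sum as a Sali\'e sum with parameter $W\equiv-\overline{4m}D$, and apply the classical evaluation (the paper cites \cite[Lemma~12.4]{IK}) followed by the change of variables $v\mapsto Wv$ on $v^2\equiv1\bmod c$. Your bookkeeping of the characters via $\left(\frac{m}{c}\right)\left(\frac{W}{c}\right)=\left(\frac{-D}{c}\right)$ and your explicit remark that $(c,D)=1$ is what makes the Sali\'e sum non-degenerate are both accurate and, if anything, slightly more careful than the paper's write-up.
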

\begin{proof}
We first observe that 
\begin{equation}
	m\overline{\rho}\lambda^2 + r(\overline{\rho}\pm 1)\lambda \equiv m\overline{\rho}(\lambda+r\overline{2m}(1\pm \rho))^2 -r^2\overline{4m\rho}-r^2\overline{4m}\rho \mp \overline{2m}r^2 \text{ mod }c.\nonumber
\end{equation}
Using this and a change of variables we can compute the $\lambda$-sum as follows:
\begin{align}
	\sum_{\lambda\text{ mod }c}e\left(\frac{m\overline{\rho}\lambda^2+r(\overline{\rho}\pm 1)\lambda}{c}\right) &= e\left(-\frac{\overline{4m}r^2}{c}(\overline{\rho}+\rho)\right)e\left(\mp \frac{\overline{2m}r^2}{c}\right)\sum_{\lambda \text{ mod }c}e\left(\frac{m\overline{\rho}}{c}\lambda^2\right) \nonumber\\
	& = \epsilon_ce\left(-\frac{\overline{4m}r^2}{c}(\overline{\rho}+\rho)\right)e\left(\mp \frac{\overline{2m}r^2}{c}\right)\left(\frac{m\overline{\rho}}{c}\right)c^{\frac{1}{2}}.\nonumber
\end{align}
Here we have used the evaluation of the Gau\ss\  sum given for example in \cite[p. 52, Exercise 4]{IK}. All together we obtain
\begin{equation}
	H_{m,c}^{\pm}(n,r) = \epsilon_c\cdot c^{\frac{1}{2}}e\left(\mp \frac{\overline{2m}r^2}{c}\right){\sum_{\rho \text{ mod }c}}^{\ast}\left(\frac{m\rho}{c}\right)\left(\frac{-\overline{4m}D}{c}(\overline{\rho}+\rho)\right).\nonumber
\end{equation}
The remaining $\rho$-sum is precisely a Sali\'e sum and it can be computed on the nose. See for example \cite[Lemma~12.4]{IK}. We obtain
\begin{equation}
	H_{m,c}^{\pm}(n,r) = \epsilon_c^2\cdot c\cdot \left(\frac{-D}{c}\right)\sum_{v^2\equiv (-D\overline{4m})^2\text{ mod }c}e\left(\frac{2v}{c}\mp \frac{\overline{2m}r^2}{c}\right)
\end{equation}
After a change of variables in the $v$-sum we are done.
\end{proof}

To handle the remaining moduli $c$ we will need some further properties of the sums $H_{m,c}^{\pm}(n,r)$. First, we recall that they exhibit typical factorization behavior inherited from the Chinese Remainder Theorem. More precisely, if $c=c_1c_2$ with $(c_1,c_2)=1$, then we can factor the Kloosterman sum as
\begin{equation}
	H_{m,c}^{\pm}(n,r) = H_{mc_1,c_2}^{\pm}(n\overline{c_1},r)\cdot H_{mc_2,c_1}^{\pm}(n\overline{c_2},r). \label{eq:factorization}
\end{equation}
See for example \cite[p. 173]{Ko2}. Further, we have already mentioned the Weil type bound \eqref{eq:basic_bound_KS}. This bound was proven for fundamental discriminants in \cite{Ko} and in \cite{Ko2} in general. Actually something slightly stronger is true. We will use the following.

\begin{lemmy}\label{lm:imp_Kl_bound}
Let $D=r^2-4mn$. We have
\begin{equation}
	\vert H_{m,c}^{\pm}(n,r)\vert \leq \tau(c)c\cdot (c,m,r)^{\frac{1}{2}}(c,D')^{\frac{1}{2}}, \nonumber
\end{equation}
where $D'=\frac{D}{(m,r)}$. If $D$ is a fundamental discriminant, then $(m,r)$ and $D'$ are co-prime and our estimate simplifies to 
\begin{equation}
	\vert H_{m,c}^{\pm}(n,r)\vert \leq \tau(c)c\cdot (c,D)^{\frac{1}{2}}. \nonumber
\end{equation}
\end{lemmy}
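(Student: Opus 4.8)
The plan is to prove the estimate one prime at a time and then reassemble it by multiplicativity. The key observation is that, written in terms of its local $p$-adic data, the right-hand side is multiplicative in $c$: the divisor function $\tau$, the gcd $(c,m,r)$ and the gcd $(c,D')$ all factor over coprime decompositions $c=c_1c_2$. Moreover, the factorization rule \eqref{eq:factorization} preserves the relevant local quantities at a prime power $p^k\|c$: replacing $m$ by $mc_1$ and $n$ by $n\overline{c_1}$ (with $c_1$ a unit modulo $p^k$) leaves the residue $r^2-4mn\equiv D$ and the $p$-part of $(m,r)$ unchanged, so that $(p^k,mc_1,r)=(p^k,m,r)$ and $(p^k,D_2')=(p^k,D')$. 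It therefore suffices to prove, for every prime power $p^k\|c$,
\begin{equation}
	\vert H_{m,p^k}^{\pm}(n,r)\vert \leq (k+1)\,p^{k}\,(p^k,m,r)^{\frac{1}{2}}(p^k,D')^{\frac{1}{2}}.\nonumber
\end{equation}

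For primes with $p\nmid 2mD$ this is immediate from Lemma~\ref{lm:eval_KS}: the congruence $v^2\equiv 1\bmod p^k$ has exactly two solutions $v=\pm1$, each summand has modulus one, and $\vert\epsilon_{p^k}^2\vert=\vert(\tfrac{-D}{p^k})\vert=1$, whence $\vert H_{m,p^k}^{\pm}(n,r)\vert\leq 2p^k\leq (k+1)p^k$, while both gcd factors are $1$. Globally this already explains the shape of the bound: the factor $\tau(c)$ is nothing but a majorant for the number $2^{\omega(c)}$ of square roots of unity produced by the Salié evaluation.

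The substance of the lemma is at the primes dividing $2mD$, where Lemma~\ref{lm:eval_KS} does not apply. Here I would argue directly from \eqref{eq:def_KS}, evaluating the inner sum over $\lambda$ as a quadratic Gauss sum with leading coefficient $m\overline{\rho}$ and linear coefficient $r(\overline{\rho}\pm1)$ modulo $p^k$. Completing the square $p$-adically shows that this Gauss sum vanishes unless $p^{\min(v_p(m),k)}\mid r(\overline{\rho}\pm1)$, and otherwise has modulus $p^{(k+\min(v_p(m),k))/2}$; the surviving $\rho$-sum is again a Salié-type sum over the roots of a quadratic congruence to a reduced modulus, and counting these roots supplies the factors $(p^k,m,r)^{1/2}$ and $(p^k,D')^{1/2}$. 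I expect this bad-prime analysis to be the main obstacle. The delicate sub-cases are $p=2$, where the Gauss sum evaluation and the quadratic-reciprocity bookkeeping differ from the odd case and must be done by hand, and the primes $p\mid m$, where the leading coefficient $m\overline{\rho}$ degenerates so that the trivial estimate on the $\rho$-sum is insufficient and one must genuinely extract the square-root cancellation of the Salié sum in order not to lose a factor of $p^{k}$. Tracking $v_p(m)$, $v_p(r)$ and $v_p(D)$ so that the exponents assemble exactly into $(p^k,m,r)^{1/2}(p^k,D')^{1/2}$ is where the care lies.

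Finally, in the fundamental case, a prime $p$ dividing both $(m,r)$ and $D'=D/(m,r)$ would force $p^2\mid D$. Since a fundamental discriminant is divisible by no odd square and by no power of $2$ higher than $4$, an elementary check (treating $p=2$ separately) rules this out, so $((m,r),D')=1$. Multiplicativity of the gcd over coprime factors then yields
\begin{equation}
	(c,m,r)\,(c,D')=(c,(m,r))\,(c,D')=(c,(m,r)D')=(c,D),\nonumber
\end{equation}
and the simplified estimate follows.
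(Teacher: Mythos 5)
Your strategy coincides with the paper's actual proof: reduction to prime powers via \eqref{eq:factorization} (with the observation that the twists $m\mapsto mc_1$, $n\mapsto n\overline{c_1}$ preserve the local data at $p$, since only the residue of $r^2-4mn$ modulo $p^k$ and the $p$-parts of $(m,r)$ matter), good primes $p\nmid 2mD$ dispatched by Lemma~\ref{lm:eval_KS} with the two roots $v=\pm 1$, and at bad primes the evaluation of the $\lambda$-sum as a quadratic Gauss sum; your vanishing criterion and the modulus $p^{(k+\min(v_p(m),k))/2}$ are exactly what the paper computes. What you defer as ``the main obstacle'' is indeed the substance of the paper's proof: writing $p^e=(m,p^s)$, $p^f=(r,p^s)$, it splits into three sub-cases. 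For $f<e$ the Gauss sum forces $\rho\equiv\mp 1\bmod p^{e-f}$; one substitutes $\rho\mapsto \mp 1+\rho p^{e-f}$, expands $\overline{\mp 1+\rho p^{e-f}}$ $p$-adically, and the phase organizes itself around $D'$ to the reduced modulus $p^{s+f-e}$. For $e\leq f$ the surviving $\rho$-sum is a Salié-type sum over \emph{all} $\rho\bmod p^s$, where the square-root cancellation must genuinely be extracted, as you anticipate. But note the degenerate case $e=f=s$ (so $p^s\mid D$): there the $\lambda$-sum is trivial and the $\rho$-sum is the full classical Kloosterman sum, estimated by Weil's bound with the factor $(p^s,n)^{\frac{1}{2}}$ --- it is not a root-count of a quadratic congruence, so your description of the surviving sum does not cover all sub-cases, though the exponents still assemble correctly.

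Your final paragraph contains a concrete error. A fundamental discriminant can be divisible by $8$ (e.g.\ $D=-8$), so the claim ``divisible by no power of $2$ higher than $4$'' is false; worse, the coprimality $((m,r),D')=1$ can genuinely fail at $p=2$: take $m=r=2$, $n=1$, so that $D=-4$ is fundamental, $(m,r)=2$ and $D'=-2$. Then your identity $(c,m,r)\,(c,D')=(c,D)$ fails at $c=2$ (left side $4$, right side $2$), so the second display of the lemma does not follow from the first by this gcd manipulation. In fairness, the paper asserts the same coprimality without proof, and it is equally false in this corner case; your argument at odd primes (via $v_p(D)\leq 1$ for fundamental $D$) is correct, and at $p=2$ the discrepancy is a bounded factor that is harmless in every later application, since all uses of the lemma carry $\tau(c)$ and $\epsilon$-power slack. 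A careful write-up should restrict the coprimality claim to odd primes and treat $p=2$ by direct inspection (or absorb the bounded factor into the constant), rather than rely on the stated identity.
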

While the proof of this is essentially contained in \cite{Ko2} and also in \cite[Lemma~3.2]{Br}, we will still give full details for convenience of the reader.
\begin{proof}
After factoring $H_{m,c}^{\pm}(n,r)$ it is sufficient to consider $c=p^s$. Further, by Lemma~\ref{lm:eval_KS} we have
\begin{equation}
	\vert H_{m,p^s}(n,r)\vert  \leq  2p^s \text{ for }p\nmid 2mD.\nonumber
\end{equation}
Thus, we still need to consider $p\mid 2mD$. We will treat the case $p\neq 2$ below. The case $p=2$ can be handled similarly using slightly modified formulae for (quadratic) Gau\ss\ sums. We omit the details.  
	
Suppose $p\mid mD$ and $p\neq 2$. We define $e,f\in \Z_{\geq 0}$ by $p^e=(m,p^s)$ and $p^f=(r,p^s)$. We consider several cases. We write $m=m'p^e$ and $r=r'p^f$.
	
First, we treat the exceptional case when $e=f=s$. Note that in this case $p^s \mid D$. In this case the $\lambda$-sum in the definition of $H^{\pm}_{m,p^s}$ is trivial and we have
\begin{equation}
	H_{m,c}^{\pm} = p^s\cdot {\sum_{\rho \text{ mod }p^s}}^{\ast} e\left(\frac{n}{p^s}(\overline{\rho}+\rho)\right).\nonumber
\end{equation}
From Weil's bound for the classical Kloosterman sum we obtain
\begin{equation}
	\vert H_{m,p^s}^{\pm}(n,r)\vert \leq  2p^{\frac{3}{2}s}(p^s,n)^{\frac{1}{2}} \leq 2p^s(p^s,m,r)^{\frac{1}{2}}(p^s,D')^{\frac{1}{2}}.\nonumber
\end{equation}
	
Second, take $0\leq f< e\leq s$. We compute the $\lambda$-sum as follows
\begin{align}
	&\sum_{\lambda\text{ mod }p^s}e\left(\frac{m\overline{\rho}\lambda^2+r(\overline{\rho}\pm 1)\lambda}{p^s}\right) \nonumber\\
	&\qquad = \sum_{\lambda \text{ mod }p^{s-e}}e\left(\frac{m'\overline{\rho}\lambda^2}{p^{s-e}}+\frac{r'(\overline{\rho}\pm 1)\lambda}{p^{s-f}}\right)\sum_{x\text{ mod }p^e} e\left(\frac{r'(\overline{\rho}\pm 1)x}{p^{e-f}}\right) \nonumber\\
	&\qquad = \delta_{\rho\equiv \mp 1\text{ mod }p^{e-f}}p^e\sum_{\lambda \text{ mod }p^{s-e}}e\left(\frac{m'\overline{\rho}\lambda^2}{p^{s-e}}+\frac{r'(\overline{\rho}\pm 1)\lambda}{p^{s-f}}\right)\nonumber \\
	&\qquad = \delta_{\rho\equiv \mp 1\text{ mod }p^{e-f}}\epsilon_{p^{s-e}}\left(\frac{m'\overline{\rho}}{p^{s-e}}\right)\cdot e\left(\frac{-\overline{4m'}r^2(\overline{\rho}+\rho)p^{-e}\mp \overline{2m'}r^2p^{-e}}{p^s}\right) \cdot p^{\frac{s+e}{2}}.\nonumber
\end{align}
We are thus left with
\begin{equation}
	H_{m,p^s}^{\pm}(n,r) = \epsilon_{p^{s-e}}p^{\frac{s+e}{2}}\cdot e\left(\mp \frac{\overline{2m'}r^2p^{-e}}{p^s}\right)\cdot  \sum_{\substack{\rho \text{ mod }p^s \\ \rho \equiv \mp 1 \text{ mod }p^{e-f}}} \left(\frac{m'\rho}{p^{s-e}}\right) e\left(\frac{n-\overline{4m'}r^2p^{-e}}{p^s}(\overline{\rho}+\rho)\right).\nonumber
\end{equation}
At this point we replace $\rho$ by $\mp 1 + \rho p^{e-f}$ and observe that
\begin{equation}
	\overline{\mp 1 + \rho p^{e-f}} \equiv  \mp 1\pm \rho p^{e-f} \mp \rho^2p^{2e-2f}+\ldots \text{ mod }p^s.\nonumber
\end{equation}
Recall that in the case at hand $D'= (r')^2p^f-4m'np^{e-f}$. Inserting this above gives
\begin{multline}
	H_{m,p^s}^{\pm}(n,r) = \epsilon_{p^{s-e}}p^{\frac{s+e}{2}}\cdot e\left(\mp \frac{2n}{p^s}\right)\left(\frac{\mp m'}{p}\right)^{s-e} \\\cdot  \sum_{\rho \text{ mod }p^{s+f-e}} e\left(-\overline{4m'}\frac{D'}{p^s}[(1\pm 1)\rho \mp \rho^2p^{e-f} \pm \ldots]\right).\nonumber
\end{multline}
The remaining sum is $\ll (s+1)p^{\frac{s+f-e}{2}}(p^s,D')^{\frac{1}{2}}$. This leads to the bound
\begin{equation}
	\vert H_{m,p^s}^{\pm}(n,r)\vert \leq (s+1)p^{s+\frac{f}{2}}(p^s,D')^{\frac{1}{2}}. \nonumber
\end{equation}
Because $p^f=(p^s,m,r)$ we are done.

Next we look at the case $0\leq e\leq f\leq s$, where $D'=Dp^{-e} = (r')^2p^{2f-e}-4m'n$. Again we start by observing that the $\lambda$-sum is
\begin{multline}
	\sum_{\lambda\text{ mod }p^s}e\left(\frac{m\overline{\rho}\lambda^2+r(\overline{\rho}\pm 1)\lambda}{p^s}\right) = p^e\sum_{\lambda \text{ mod }p^{s-e}}e\left(\frac{m'\overline{\rho}\lambda^2+r'p^{f-e}(\overline{\rho}\pm 1)\lambda}{p^{s-e}}\right) \\ 
	= p^{\frac{s+e}{2}}\epsilon_{p^{s-e}}\left(\frac{m'\overline{\rho}}{p^{s-e}}\right)e\left(\mp \frac{\overline{2m'}(r')^2p^{2f-2e}}{p^{s-e}}-\frac{\overline{4m'}(r')^2p^{2f-2e}}{p^{s-e}}(\overline{\rho}+\rho)\right).\nonumber
\end{multline}
Altogether we obtain
\begin{equation}
	H_{m,p^s}^{\pm}(n,r) = \epsilon_{p^{s-e}}p^{\frac{s+e}{2}}\cdot e\left(\mp \frac{\overline{2m'}(r')^2p^{2f-e}}{p^{s}}\right)\cdot  \sum_{\rho \text{ mod }p^s } \left(\frac{m'\rho}{p^{s-e}}\right) e\left(\frac{-\overline{4m'}D'}{p^s}(\overline{\rho}+\rho)\right).\nonumber
\end{equation}
The remaining sum, which is very similar to the usual Kloosterman sum or Sali\'e sum, can be estimated directly and exhibits square root cancellation. We obtain
\begin{equation}
	\vert H_{m,p^s}^{\pm}(n,r)\vert \leq 2p^{s+\frac{e}{2}}(p^s,D')^{\frac{1}{2}}.\nonumber
\end{equation}
We are done because $p^e = (p^s,m,r)$. This was the last case to consider and the proof is complete.
\end{proof}

We conclude this section by providing a useful estimate concerning an (incomplete) exponential sum that we will need later on. While this has nothing to do with the sums $H_{m,c}^{\pm}(n,r)$ studied so far, the proof will use the completion method which in the case at hand produces classical Kloosterman sums. This justifies the placement of this lemma in the current section.

\begin{lemmy}\label{lm:completeion}
Let $A,L\geq 0$ be parameters and let $a,c,s,t\in \N$. We have
\begin{equation}
	\sum_{\substack{A< x<A+L \\ x\equiv s\text{ mod }t \\  (x,c)=1}}e\left(\frac{a\overline{x}}{c}\right) \ll_{\epsilon} (cL)^{\epsilon}\left(\frac{L}{c}(a,c)+c^{\frac{1}{2}}\right).\nonumber
\end{equation}
\end{lemmy}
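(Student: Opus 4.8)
The plan is to treat this sum by the \emph{completion method}. The summand $e(a\overline{x}/c)$ and the coprimality condition $(x,c)=1$ depend only on $x$ modulo $c$, so the function $g(x)=e(a\overline{x}/c)\mathbf{1}_{(x,c)=1}$ is $c$-periodic, and its finite Fourier coefficients $\widehat{g}(h)=\sum_{u\bmod c}g(u)e(-hu/c)$ are exactly the classical Kloosterman sums $\KL(a,-h;c)=\sum_{u\bmod c}^{\ast}e((a\overline{u}-hu)/c)$. Expanding $g$ in this Fourier basis and inserting it into the sum, I would obtain
\begin{equation}
\sum_{\substack{A<x<A+L\\ x\equiv s\bmod t\\ (x,c)=1}}e\left(\frac{a\overline{x}}{c}\right)=\frac{1}{c}\sum_{h\bmod c}\KL(a,-h;c)\,T(h),\qquad T(h)=\sum_{\substack{A<x<A+L\\ x\equiv s\bmod t}}e\left(\frac{hx}{c}\right).\nonumber
\end{equation}
Each $T(h)$ is a geometric sum over the progression $x\equiv s\bmod t$, so $\vert T(h)\vert\le\min(L/t+1,\Vert ht/c\Vert^{-1})$, where $\Vert\cdot\Vert$ denotes the distance to the nearest integer. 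This is precisely the passage to classical Kloosterman sums advertised before the statement.

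I would then separate the frequencies according to the size of $\Vert ht/c\Vert$. The frequencies for which the geometric sum does not oscillate, namely those with $c/(c,t)\mid h$, form the main block. Undoing the completion on this block, the associated Kloosterman sums reassemble into the single complete sum $\sum_{y\equiv s\bmod (c,t),\,(y,c)=1}e(a\overline{y}/c)$ over a fixed residue class modulo $(c,t)$; when $(c,t)=1$ this is literally the Ramanujan sum $\KL(a,0;c)$, bounded by $(a,c)$, and it produces the main contribution $\ll \frac{L}{c}(a,c)$. For the remaining frequencies I would invoke Weil's bound $\vert\KL(a,-h;c)\vert\le\tau(c)(a,h,c)^{1/2}c^{1/2}$. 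Since $(a,h,c)=(a,(h,c))$ depends only on $(h,c)$, grouping the $h$-sum by this common divisor and summing the weights $\Vert ht/c\Vert^{-1}$ against $1/\vert h\vert$ collapses the divisor-weighted series to a divisor-function factor, which is absorbed into $(cL)^{\epsilon}$ and yields the error term $\ll c^{1/2+\epsilon}$.

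The main obstacle is the interaction of the progression modulus $t$ with $c$ through $d=(c,t)$. When $d>1$ the main-block frequencies $h\in\{0,\,c/d,\,2c/d,\dots\}$ must be handled together: bounding their Kloosterman sums individually by Weil and applying the triangle inequality destroys genuine cancellation among them (one checks on small examples, e.g.\ $c=p^{2}$, $t=p$, that the true main block can in fact vanish) and would replace the clean main term $\frac{L}{c}(a,c)$ by a spurious term of size $Lc^{-1/2}(a,c)^{1/2}$. The delicate point is therefore to recognise the reassembled complete sum over the class modulo $d$ and estimate it as a genuine character sum, and dually to organise the off-diagonal $\gcd$-weighted geometric series so that the final error is a clean $c^{1/2+\epsilon}$ rather than $c^{1/2+\epsilon}(a,c)^{1/2}$. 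Once this structure is in place, the remaining bookkeeping of the geometric weights is routine.
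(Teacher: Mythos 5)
Your proposal follows the same skeleton as the paper's proof --- completion of the $c$-periodic function $x\mapsto \mathbf{1}_{(x,c)=1}e(a\overline{x}/c)$, recognition of its Fourier coefficients as Kloosterman sums, the non-oscillatory frequencies giving the Ramanujan-type main term $\frac{L}{c}(a,c)$, and Weil's bound with a gcd-grouping for the remaining frequencies --- but it genuinely diverges at one point, and there your version is not just different but necessary. The paper bounds the progression sum by $\vert\lambda(y/c)\vert\le 2/\vert 1-e(y/c)\vert\ll c/\vert y\vert$, i.e.\ it treats the geometric ratio as $e(y/c)$; as you observe, the true ratio is $e(yt/c)$, so this inequality fails for $t>1$ (take $t\mid c$ and $y=c/t$: the ratio is $1$ and $\lambda$ has size $L/t$), and consequently the paper isolates only $h=0$ as a main term. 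Your identification of the full main block $\{h:\,(c/d)\mid h\}$ with $d=(c,t)$, and its reassembly, by undoing the completion, into $\frac{Nd}{c}W$ with $N\approx L/t$ and $W=\sum_{u\equiv s\,(d),\,(u,c)=1}e(a\overline{u}/c)$, is the correct treatment; moreover this care is actually needed in the paper's own application (the $l$-sum in the proof of Lemma~\ref{lm:V_a} has modulus $ma$ and progression modulus $4t$ with $t\mid(2mD)^{\infty}$, so the two share common factors in general). Your example $c=p^{2}$, $t=p$, where the reassembled block vanishes while termwise Weil gives $Lc^{-1/2}$, is exactly the right sanity check. The remaining difference --- you remove the spurious $(a,c)^{1/2}$ from the tail by grouping over $(h,a,c)$, the paper does it via Selberg's identity --- is cosmetic: both yield $c^{1/2+\epsilon}$.

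The one step you flag as delicate but do not carry out is the estimate for $W$, and it must be Ramanujan-quality, not Weil-quality: you need $\vert W\vert\le\min\bigl((a,c),\,c/d\bigr)$. This follows by CRT plus a ``last digit'' argument: for $c=p^{k}$, $d=p^{j}$, $(a,p^{k})=p^{\alpha}$, the summand depends only on $u$ bmod $p^{k-\alpha}$, so if $j\ge k-\alpha$ the local sum is trivially $p^{k-j}\le p^{\alpha}$; if $j<k-\alpha$ and $k-\alpha\ge 2$, write $u=u_{0}+p^{k-\alpha-1+\alpha}u_{1}=u_{0}+p^{k-1}u_{1}$ and note the phase shifts by $e_{p}(-a p^{-\alpha}\overline{u_{0}}^{2}u_{1})$ with $\overline{u_{0}}^{2}$ a unit, so the sum over $u_{1}$ bmod $p$ vanishes (the phase $a\overline{u}$ has no stationary point); the boundary case $k-\alpha=1$, $j=0$ is a Ramanujan sum. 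Granting this, the main block is at most $\frac{(L/t+1)d}{c}\min((a,c),c/d)\le\frac{L}{c}(a,c)+1$, using $d\le t$, and your off-block bookkeeping (weights $\Vert ht/c\Vert^{-1}$ grouped by $h$ modulo $c/d$, Weil with gcd-grouping) gives $O(c^{1/2+\epsilon})$. With that lemma written down your argument is complete, and --- unlike the paper's write-up, which is only valid as stated when $(t,c)=1$ --- it proves the statement in the generality in which it is later used.
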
 
This is a slightly modified version of \cite[Lemma~8]{Iw_half}. While the modifications are rather straightforward we work a little bit in order to remove a factor $(a,c)^{\frac{1}{2}}$ in the second term of the estimate. This will be important for us later on and it is the reason why we include a proof of this otherwise relatively basic result.
\begin{proof}
The proof uses the completion method as described in \cite[Section~12.2]{IK} for example. First, at the cost of an $O(1)$-error, which is absorbed by the second term of our bound, we can assume that $A\in \Z$. Without loss of generality we assume that $c>0$.

We define 
\begin{equation}
	F(x) = \delta_{(x,c)=1}e\left(\frac{a\overline{x}}{c}\right).\nonumber
\end{equation}	
This is a $c$-periodic function and we define its Fourier transform by 
\begin{equation}
	\widehat{F}(y) = \sum_{x\text{ mod }c}F(x)e\left(-\frac{yx}{c}\right). \nonumber
\end{equation}
After applying Fourier inversion we get
\begin{align}
	\sum_{\substack{A< x<A+L \\ x\equiv s\text{ mod }t \\  (x,c)=1}}e\left(\frac{a\overline{x}}{c}\right) &= 	\sum_{\substack{A< x<A+L \\ x\equiv s\text{ mod }t}}\frac{1}{c}\sum_{y\text{ mod }c}\widehat{F}(y)e\left(\frac{yx}{c}\right) \nonumber\\
	&= \frac{\widehat{F}(0)}{c}\sum_{\substack{A< x<A+L \\ x\equiv s\text{ mod }t}}1 + \sum_{\substack{-\frac{c}{2}<y\leq \frac{c}{2},\\ y\neq 0}} \frac{\widehat{F}(y)}{c}\cdot\underbrace{\sum_{\substack{A< x<A+L \\ x\equiv s\text{ mod }t }}e\left(\frac{yx}{c}\right)}_{=\lambda\left(\frac{y}{c}\right)}.\nonumber
\end{align}
A standard argument using the geometric series shows that
\begin{equation}
	\left\vert \lambda\left(\frac{y}{c}\right)\right\vert \leq \frac{2}{\vert 1-e(\frac{y}{c})\vert} \ll \frac{c}{\vert y\vert}.\nonumber
\end{equation}
for $0<\vert y\vert \leq \frac{c}{2}$. We arrive at
\begin{equation}
	\sum_{\substack{A< x<A+L \\ x\equiv s\text{ mod }t \\  (x,c)=1}}e\left(\frac{a\overline{x}}{c}\right) \ll \frac{L}{c}\vert \widehat{F}(0)\vert + \sum_{\substack{-\frac{c}{2}<y\leq \frac{c}{2},\\ y\neq 0}} \frac{1}{\vert y\vert}\cdot\vert\widehat{F}(y)\vert.
\end{equation}
The remaining task is to analyse $\widehat{F}$. In our case this simply turns out to be a Kloosterman sum.

Note that $\widehat{F}(0)$ is a Ramanujan sum and by \cite[(3.3)]{IK} we have
\begin{equation}
	\vert\widehat{F}(0)\vert =\mu\left(\frac{c}{(c,a)}\right)^2\frac{\varphi(c)}{\varphi(\frac{c}{(c,a)})}  \leq (a,c).\nonumber
\end{equation}
This gives us the desired main term. Next, from \cite[Corollary~11.12]{IK} for example, we recall that
\begin{equation}
	\vert \widehat{F}(y)\vert = \vert S(-y,a;c)\vert \leq \tau(c)(y,a,c)^{\frac{1}{2}}\cdot c^{\frac{1}{2}} \nonumber
\end{equation}
by the Weil bound. Finally, we have Selberg's identity
\begin{equation}
	S(-y,a;c) = \sum_{l\mid (y,a,c)}l\cdot S(-\frac{ay}{l^2},1,\frac{c}{l}).\nonumber
\end{equation}
So far we have seen that 
\begin{equation}
	\sum_{\substack{A< x<A+L \\ x\equiv s\text{ mod }t \\  (x,dc)=1}}e\left(\frac{a\overline{x}}{c}\right) \ll \frac{L}{c}(a,c) + \sum_{\substack{-\frac{c}{2}<y\leq \frac{c}{2},\\ y\neq 0}} \frac{1}{\vert y\vert}\cdot\vert S(y,-a;c)\vert. \nonumber
\end{equation}
To handle the second sum we use Selberg's identity and estimate
\begin{align}
	\sum_{1\leq y\leq X}\frac{1}{y}\vert S(y,-a;c)\vert &\ll \sum_{t\mid (a,c)} \sum_{1\leq y\leq X/l}\frac{1}{y}\vert S(\frac{-ay}{l},1;\frac{c}{l})\vert\nonumber \\
	&\ll \sum_{l\mid (a,c)} \sum_{1\leq y\leq X/l}\frac{1}{y} \tau(c/l)\cdot (c/l)^{\frac{1}{2}} \ll_{\epsilon} X^{\epsilon}c^{\frac{1}{2}+\epsilon}.\nonumber
\end{align}
Plugging this in above produces the desired bound.
\end{proof}

\section{Jacobi forms}\label{section_jac}

We start by recalling some basic definitions of Jacobi forms. We took the material from \cite{Ib}, but the classical reference is \cite{EZ}.

For a ring $R\subseteq \R$ we write $\GL_2^+(R)$ for the group of $2\times 2$ matrices over $R$ with determinant in $R^{\times}\cap \R_{>0}$.  Further let $H(R) = R^2\times R$ denote the Heisenberg group. We define the Jacobi group over $R$ as 
\begin{equation}
	G^J(R) = \GL_2^+(R) \ltimes H(R).\nonumber
\end{equation}
The group law is given by standard matrix multiplication $g_1g_2$ for $g_1,g_2\in \GL_2^+(R)$ and by the rules
\begin{align}
	[(\lambda,\mu),\kappa]\cdot [(\lambda',\mu'),\kappa'] &= [(\lambda+\lambda',\mu+\mu'),\kappa+\kappa'+\lambda\mu'-\mu\lambda'] \text{ and }\nonumber \\
	[(\lambda,\mu),\kappa]\times g_1 &= g_1\times [\det(g_1)^{-1}(\lambda,\mu)g_1,\det(g_1)^{-1}\kappa].\nonumber
\end{align}

For $N\in \N$ we define the lattices
\begin{equation}
	\Gamma_0(N)^J = \{ (g,[(\lambda,\mu),\kappa])\in G^J(\Q)\colon g\in \Gamma_0(N),\, \lambda,\mu,\kappa \in \Z\}.\nonumber
\end{equation}
Note that $\Gamma_0(1)^J=\Gamma^J$ is the usual Jacobi group. All these congruence subgroups contain
\begin{equation}
	\Gamma_{\infty}^{J} = \left\{ \left(\left(\begin{matrix} 1 & n\\ 0 & 1\end{matrix}\right),[(0,\mu),\kappa]\right)\colon n,\mu,\kappa\in \Z\right\}. \nonumber
\end{equation}

Given a function $f\colon \Hb^{(1)}\times \C \to \C$ we define the usual slash action\footnote{This is only a proper group action when restricted to $\textrm{SL}_2(\R)\ltimes H(\R)$.} by
\begin{equation}
	[f\vert_{k,m}g](\tau,z) = (c\tau+d)^{-k}e\left(-ml\frac{cz^2}{c\tau+d}\right)f\left(\frac{a\tau+b}{c\tau+d},\frac{lz}{c\tau+d}\right) \text{ for }g=\left(\begin{matrix} a&b\\c&d\end{matrix}\right)\in \GL_2^+(\R) \nonumber
\end{equation}
and $l=ad-bc$. Furthermore,
\begin{equation}
	[f\vert_{k,m}[(\lambda,\mu),\kappa]](\tau,z) = e(m\lambda^2+2m\lambda z+\lambda\mu+\kappa)f(\tau,z+\lambda\tau+\mu).\nonumber
\end{equation}

A holomorphic function $f\colon \Hb\times \C\to\C$ is a Jacobi form of weight $k$ and index $m$ with respect to $\Gamma_0(N)^J$ if the following three conditions hold
\begin{enumerate}
	\item For all $\gamma\in \Gamma_0(N)$ we have $f\vert_{k,m}\gamma = f$;
	\item For all $\lambda,\mu\in \Z$ we have $f\vert_{k,m}[(\lambda,\mu),0] = f$;
	\item For any $M\in \SL_2(\Z)$ we have a Fourier expansion 
	\begin{equation}
		[f\vert_{k,m}M](\tau,z) = \sum_{\substack{n\in n_M^{-1}\Z, r\in \Z,\\ r^2\leq 4mn}}c_f^M(n,r)e(n\tau+rz).\nonumber
	\end{equation}
\end{enumerate}
We denote the space of all such functions by $J_{k,m}(\Gamma_0(N)^J)$. If $f\in J_{k,m}(\Gamma_0(N)^J)$ satisfies $c_f^M(n,r)=0$ unless $r^2<4mn$, then we call $f$ a cusp form. The space of all cusp forms is denoted by $J^{\textrm{cusp}}_{k,m}(\Gamma_0(N)^J)$.

To shorten notation we write $J_{k,m}=J_{k,m}(\Gamma_0(1)^J)$ and similarly $J^{\textrm{cusp}}_{k,m} = J^{\textrm{cusp}}_{k,m}(\Gamma_0(1)^J)$. Furthermore, for $M=I_2$ the identity matrix, we have $n_M=1$ and we write $c^M_f(n,r) = c_f(n,r)$. In this case we record the Fourier expansion
\begin{equation}
	f(\tau,z) = \sum_{\substack{n,r\in \Z, \\ r^2< 4mn}}c_f(n,r)e(n\tau+rz),\nonumber
\end{equation}
for $f\in J^{\textrm{cusp}}_{k,m}(\Gamma_0(N)^J)$.

The space $J^{\textrm{cusp}}_{k,m}(\Gamma_0(N)^J)$ is equipped with the Petersson inner product
\begin{equation}
	\langle f,g\rangle_{\Gamma_0(N)^J} = \int_{\Gamma_0(N)^J\backslash (\Hb^{(1)}\times \C)} f\overline{g}\mu_{k,m}^2 dV \nonumber
\end{equation}
where in the coordinates $\tau=u+iv$ and $z=x+iy$ we have $dV=v^{-3}dudvdxdy$ and $\mu_{k,m}(\tau,z) = v^{k/2}e^{-2\pi y^2/v}$.

Note that since $\Gamma_0(N)\subseteq \textrm{SL}_2(\Z)$ we have the inclusion
\begin{equation}
	J_{k,m}^{\textrm{cusp}} \subseteq J_{k,m}^{\textrm{cusp}}(\Gamma_0(N)^J).\nonumber
\end{equation}
This is not an isometric inclusion. Indeed, if $f\in J_{k,m}^{\textrm{cusp}}$, then we have
\begin{equation}
	\langle f,f\rangle_{\Gamma_0(N)^J} = [\Gamma^J\colon\Gamma_0(N)^J] \langle f,f\rangle_{\Gamma^J}.\label{eq:scaling_of_norm}
\end{equation}
We have $[\Gamma^J\colon\Gamma_0(N)^J]=[\SL_2(\R)\colon\Gamma_0(N)]$.

Finally, let us record the following version of the Petersson formula for Jacobi forms.

\begin{theorem}[Petersson formula for Jacobi forms]\label{th:petersson}
Let $k\geq 4$ be even, $m,n,r\in \Z$ with $D=r^2-4mn<0$ and $N\in \N$. Put 
\begin{equation}
	\lambda_{k,m}(D) =\frac{\Gamma(k-3/2)}{4\pi^{k-3/2}}m^{k-2}\vert D\vert^{3/2-k}.\nonumber
\end{equation}
Further, let $\mathcal{O}_{k,m}(N)$ be an orthogonal basis of $S_{k,m}^{\textrm{cusp}}(\Gamma_0(N)^J)$. Then we have
\begin{multline}
	\lambda_{k,m}(D)\sum_{f\in \mathcal{O}_{k,m}(N)} \frac{\vert c_f(n,r)\vert^2}{\langle f,f\rangle_{\Gamma_0(N)^J}} \\ = 1 + \frac{i^k\pi}{\sqrt{2m}}\sum_{\pm}\sum_{\substack{c\geq 1\\ N\mid c}}c^{-\frac{3}{2}}H_{m,c}^{\pm}(n,r)\cdot e\left(\pm\frac{r^2}{2mc}\right)J_{k-3/2}\left(\frac{\pi\vert D\vert}{mc}\right),\nonumber
\end{multline}	
for the Kloosterman type sums $H_{m,c}^{\pm}(n,r)$ defined in \eqref{eq:def_KS}.
\end{theorem}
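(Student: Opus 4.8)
The plan is to realize the sum over $\mathcal{O}_{k,m}(N)$ spectrally, by attaching Jacobi Poincaré series to $\Gamma_0(N)^J$ and computing their $(n,r)$-th Fourier coefficient in two different ways. This is the classical approach of Eichler--Zagier \cite{EZ} in level one, adapted to $\Gamma_0(N)^J$ as in \cite{Ko2}. First I would fix the seed $e_{n,r}(\tau,z)=e(n\tau+rz)$ and check, using the slash formulas of Section~\ref{section_jac}, that it is invariant under $\vert_{k,m}\gamma$ for every $\gamma\in\Gamma_\infty^J$: the translation $\tau\mapsto\tau+1$ contributes $e(n)=1$, while the lower Heisenberg elements $[(0,\mu),\kappa]$ contribute $e(r\mu+\kappa)=1$. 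This makes
\begin{equation}
	P_{n,r}=\sum_{\gamma\in\Gamma_\infty^J\backslash\Gamma_0(N)^J}e_{n,r}\vert_{k,m}\gamma \nonumber
\end{equation}
well defined, and for $k\geq 4$ it converges absolutely to an element of $S_{k,m}^{\textrm{cusp}}(\Gamma_0(N)^J)$.

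Next I would establish the reproducing property by unfolding. For $f\in S_{k,m}^{\textrm{cusp}}(\Gamma_0(N)^J)$ the standard unfolding against $\Gamma_\infty^J$ reduces $\langle P_{n,r},f\rangle$ to an integral over $u=\Re\tau\in[0,1)$, $x=\Re z\in[0,1)$, $v>0$, $y\in\R$: the $u$- and $x$-integrals pick out the $(n,r)$-th Fourier coefficient of $f$, the $y$-integral is Gaussian (and produces a factor $\sqrt{m}$), and the $v$-integral is a $\Gamma$-integral. Tracking the constants, this gives
\begin{equation}
	\langle P_{n,r},f\rangle_{\Gamma_0(N)^J}=2\lambda_{k,m}(D)\,\overline{c_f(n,r)}. \nonumber
\end{equation}
Expanding $P_{n,r}$ in the orthogonal basis $\mathcal{O}_{k,m}(N)$ and reading off the $(n,r)$-th coefficient then yields the spectral side,
\begin{equation}
	c_{P_{n,r}}(n,r)=2\lambda_{k,m}(D)\sum_{f\in\mathcal{O}_{k,m}(N)}\frac{\vert c_f(n,r)\vert^2}{\langle f,f\rangle_{\Gamma_0(N)^J}}. \nonumber
\end{equation}

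The decisive and most laborious step is to compute $c_{P_{n,r}}(n,r)$ directly from the coset sum, organized according to the lower-left entry $c$ of the $\SL_2$-part of $\gamma$. The cosets with $c=0$ are represented by $\pm I$ together with Heisenberg translations; a short computation shows that only the trivial Heisenberg element preserves the frequency $(n,r)$, and for even $k$ both signs $\pm I$ survive, contributing exactly the constant $2$, which after dividing by $2\lambda_{k,m}(D)$ becomes the term $1$ on the right-hand side. For each $c\geq 1$ with $N\mid c$ one runs Bruhat-type representatives through the slash action and extracts the $(n,r)$-th coefficient by Poisson summation: the sum over the Heisenberg variable $\lambda$ together with the Fourier expansion in $z$ produces a quadratic Gauss sum (the inner $\lambda$-sum of $H_{m,c}^{\pm}$, the phase $e(\pm r^2/2mc)$, and the archimedean normalization $(2m)^{-1/2}$), the sum over $d\bmod c$ in $\Gamma_0(N)$ produces the $\rho$-sum, and together they assemble exactly $H_{m,c}^{\pm}(n,r)$ as in \eqref{eq:def_KS}, while the remaining $v$-integral is a Bessel integral evaluating to $J_{k-3/2}(\pi\vert D\vert/mc)$ with phase $i^k$. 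Equating the two expressions for $c_{P_{n,r}}(n,r)$ and dividing by $2\lambda_{k,m}(D)$ produces the stated identity.

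I expect the main obstacle to be precisely this last computation: correctly matching the Heisenberg Gaussian sum and the $\SL_2$-coset sum to the normalization in \eqref{eq:def_KS} (including the two signs $\pm$ and the phase $e(\pm r^2/2mc)$), and evaluating the archimedean integral to $J_{k-3/2}$ with the exact constant $i^k\pi/\sqrt{2m}$. All of the arithmetic content, in particular the appearance of the Salié-type sums, is encoded here, so this is where the bookkeeping of constants must be done with care; by contrast the convergence and unfolding steps are routine once $k\geq 4$.
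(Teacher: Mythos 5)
Your route is exactly the one the paper takes: the paper's own proof of Theorem~\ref{th:petersson} is a citation to the standard Poincar\'e-series argument (\cite[(1) and (2)]{GKZ}, \cite[(2)]{Ko}) together with the remark that the passage to $\Gamma_0(N)^J$ is routine, and that argument is precisely what you outline --- the seed $e_{n,r}$, unfolding for the reproducing property, and a cell-by-cell computation of the $(n,r)$-th coefficient of $P_{n,r}$. Your unfolding is correct for the paper's normalization of the inner product (the Gaussian $y$-integral and the $\Gamma$-integral do give the constant $2\lambda_{k,m}(D)$), and your description of how the $c\geq 1$ cells assemble into $H^{\pm}_{m,c}(n,r)$, the phases $e(\pm r^2/2mc)$ and the Bessel factor $J_{k-3/2}$ is the right picture of where all the arithmetic lives.

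There is, however, one concrete slip in your $c=0$ bookkeeping. The $c=0$ cosets are represented by $\pm I\cdot[(\lambda,0),0]$ with $\lambda\in\Z$, and since
\begin{equation}
	e_{n,r}\vert_{k,m}(-I) = (-1)^{k}\, e(n\tau-rz),\nonumber
\end{equation}
the $-I$ family produces the frequencies $(n-r\lambda+m\lambda^{2},\,-r+2m\lambda)$. These hit $(n,r)$ only when $r\equiv -r \bmod 2m$, i.e. when $m\mid r$ (take $\lambda=r/m$); otherwise the $-I$ cosets feed the $(n,-r)$-th coefficient instead. So the diagonal contribution of the $c=0$ cell is $1+(-1)^{k}\delta_{m\mid r}$, not $2$: your claim that ``both signs $\pm I$ survive'' is valid for $m=1$, or more generally whenever $m\mid r$, but fails in general, and carried out honestly your computation yields the main term $\left(1+(-1)^{k}\delta_{m\mid r}\right)/2$ after dividing by $2\lambda_{k,m}(D)$, rather than the stated $1$. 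This discrepancy is in fact a simplification already built into the form in which the formula is recorded in the literature and used here (only an upper bound on the spectral side matters in the application, and $\left(1+(-1)^{k}\delta_{m\mid r}\right)/2\leq 1$), but a derivation that forces the constant $2$ for all $m,r$ contains an error; you should either carry the extra delta through the statement or note explicitly why discarding it is harmless.
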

\begin{proof}
The proof is standard and uses basic properties of Poincar\'e series. See \cite[(1) and (2)]{GKZ} as well as \cite[(2)]{Ko}. The adaptions to be made to cover $\Gamma_0(N)^J$ are straight forward. See also \cite{Ho}.
\end{proof}

\section{The method of Iwaniec} \label{subsec}

Our goal in this section is to establish Theorem~\ref{th:bound_jacobi}. We turn towards the technical main part of the paper, which is concerned with estimating the geometric side of the Petersson formula in order to deduce strong bounds for Fourier coefficients of Jacobi forms. We will do so following the method of Iwaniec from \cite{Iw_half}. See also \cite[Section~5.3]{Iw}. 

Throughout this section we let $f\in S_{k,m}^{\textrm{cusp}}$ be a Jacobi form of weight $k$ and index $m$. We will take $r,m,n\in \N$ with $D=r^2-4mn<0$ a fundamental discriminant and assume that $m\ll \vert D\vert^{\frac{1}{2}}$. In this section all implicit constants are allowed to depend on $\epsilon>0$, which we imagine to be tiny and on the weight $k\geq 4$. We will simply write $\ll$ instead of $\ll_{k,\epsilon}$ from now on.

\subsection{Setting up the scene}

For the set-up we fix a parameter $P\ll \vert D\vert$, which will be optimized at the end. We start with the simple observation
\begin{align}
	&\lambda_{k,m}(D)\sum_{\substack{P\leq p\leq 2P\\ p\nmid mD}}\log(p)\frac{\vert c_{f_{\circ}}(n,r)\vert^2}{\langle f_{\circ},f_{\circ}\rangle_{\Gamma_0(p)^J}} \leq \lambda_{k,m}(D)\sum_{\substack{P\leq p\leq 2P\\ p\nmid mD}} \log(p)\sum_{f\in \mathcal{O}_{k,m}(p)}\frac{\vert c_{f}(n,r)\vert^2}{\langle f,f\rangle_{\Gamma_0(p)^J}} \nonumber \\
	&\qquad = \sum_{\substack{P\leq p\leq 2P\\ p\nmid mD}}\log(p) + \frac{i^k\pi}{\sqrt{2m}} \sum_{\pm}\sum_{\substack{P\leq p\leq 2P\\ p\nmid mD}}\log(p) \sum_{\substack{c\geq 1\\ p\mid c}}\frac{H_{m,c}^{\pm}(n,r)}{c^{\frac{3}{2}}}e\left(\pm\frac{r^2}{2mc}\right)J_{k-3/2}\left(\frac{\pi\vert D\vert}{mc}\right). \nonumber
\end{align}
Obviously, by the prime number theorem, we have
\begin{equation}
	\sum_{\substack{P\leq p\leq 2P\\ p\nmid mD}}\log(p) \ll P.\nonumber
\end{equation}
On the other hand, by \eqref{eq:scaling_of_norm} we find that
 \begin{align}
 	\lambda_{k,m}(D)\sum_{\substack{P\leq p\leq 2P\\ p\nmid mD}}\log(p)\frac{\vert c_{f_{\circ}}(n,r)\vert^2}{\langle f_{\circ},f_{\circ}\rangle_{\Gamma_0(p)^J}} &= \lambda_{k,m}(D)\frac{\vert c_{f_{\circ}}(n,r)\vert^2}{\langle f_{\circ},f_{\circ}\rangle_{\Gamma^J}}\sum_{\substack{P\leq p\leq 2P\\ p\nmid mD}}\frac{\log(p)}{p+1} \nonumber\\
 	&\gg \vert D\vert^{-\epsilon}\lambda_{k,m}(D)\frac{\vert c_{f_{\circ}}(n,r)\vert^2}{\langle f_{\circ},f_{\circ}\rangle_{\Gamma^J}}. \nonumber
 \end{align}
Thus, we end up with
\begin{equation}
	\left(\frac{m}{\vert D\vert}\right)^{k-\frac{3}{2}}\frac{\vert c_{f_{\circ}}(n,r)\vert^2}{\langle f_{\circ},f_{\circ}\rangle_{\Gamma^J}} \ll \vert D\vert^{\epsilon}(m^{\frac{1}{2}}P+\vert \mathcal{S}\vert),\label{eq:starting_point}
\end{equation}
for
\begin{equation}
	\mathcal{S} = \sum_{\pm }\sum_{c=1}^{\infty} \omega(c)\frac{H_{m,c}^{\pm}(n,r)}{c^{\frac{3}{2}}}e\left(\pm\frac{r^2}{2mc}\right)J_{k-3/2}\left(\frac{\pi\vert D\vert}{mc}\right) \nonumber
\end{equation}
with
\begin{equation}
	\omega(c) = \sum_{\substack{P\leq p\leq 2P\\ p\nmid mD \\ p\mid c}}\log(p).\nonumber
\end{equation}

The result will follow from a careful analysis of $\mathcal{S}$, which we conduct in the following subsections.

\subsection{The tails}

We fix further parameters $0<C<K\ll \vert D\vert^2$ to be optimized later. We use these to split the sum
\begin{equation}
	\mathcal{S} = \mathcal{S}^{\flat} + \mathcal{S}^{\ast}+\mathcal{S}^{\sharp}\nonumber
\end{equation}
according to $c\leq C$, $C<c<K$ and $K\leq c$.

\begin{lemmy}\label{lm:tails}
For $C\leq \frac{\vert D\vert}{m}\leq K$ and $k\geq 4$ we have
\begin{equation}
	\mathcal{S}^{\flat}\ll \left(\frac{\vert D\vert}{m}\right)^{-\frac{1}{2}}\vert D\vert^{\epsilon}C \nonumber
\end{equation}
and 
\begin{equation}
	\mathcal{S}^{\sharp} \ll \left(\frac{\vert D\vert}{m}\right)^{\frac{3}{2}}\vert D\vert^{\epsilon}K^{-1}. \nonumber
\end{equation}
\end{lemmy}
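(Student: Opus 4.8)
The plan is to estimate both tails \emph{trivially}, that is, without exploiting any cancellation, using only the Weil-type bound of Lemma~\ref{lm:imp_Kl_bound} together with the size of the Bessel factor in the appropriate range. Writing $X=\vert D\vert/m$ for brevity, I would first record that the arithmetic data are harmless: the weight satisfies $\omega(c)\le \log c$, so together with the factor $\tau(c)$ appearing in Lemma~\ref{lm:imp_Kl_bound} it contributes only a $c^{\epsilon}$. Inserting $\vert H_{m,c}^{\pm}(n,r)\vert\le \tau(c)\,c\,(c,D)^{1/2}$ and treating the two terms of the $\pm$-sum separately, a single term of $\mathcal{S}$ is
\[
\ll c^{\epsilon}\,\frac{(c,D)^{1/2}}{c^{1/2}}\,\Bigl\vert J_{k-3/2}\Bigl(\tfrac{\pi X}{c}\Bigr)\Bigr\vert .
\]
Everything then reduces to which asymptotic of $J_{k-3/2}$ is active, and the two cutoffs are engineered precisely so that a single regime governs each tail: the hypothesis $C\le X\le K$ guarantees that for $c\le C$ the argument $\pi X/c\ge \pi$ is large, whereas for $c\ge K$ it is $\le\pi$, hence small.

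For $\mathcal{S}^{\flat}$ I would use the oscillatory bound $J_{\nu}(y)\ll y^{-1/2}$, valid for $y\ge 1$. Then $\vert J_{k-3/2}(\pi X/c)\vert\ll (c/X)^{1/2}$, the $c^{-1/2}$ cancels, and (using $c\le C\le\vert D\vert$ to absorb $c^{\epsilon}$ into $\vert D\vert^{\epsilon}$) each term is $\ll \vert D\vert^{\epsilon}(c,D)^{1/2}X^{-1/2}$. The only arithmetic input is the elementary divisor estimate $\sum_{c\le C}(c,D)^{1/2}\le C\sum_{d\mid D}d^{-1/2}\ll C\vert D\vert^{\epsilon}$, obtained by grouping $c$ according to $d=(c,D)\mid D$ and counting multiples. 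This yields $\mathcal{S}^{\flat}\ll X^{-1/2}\vert D\vert^{\epsilon}C$, the claimed bound.

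For $\mathcal{S}^{\sharp}$ I would instead use the small-argument bound $J_{\nu}(y)\ll y^{\nu}$ for $0<y\le\pi$, which requires $\nu=k-3/2>0$; this is where the hypothesis $k\ge 4$ enters. This gives each term $\ll c^{\epsilon}(c,D)^{1/2}X^{k-3/2}c^{-(k-1)}$, and since $k-1\ge 3$ the $c$-sum converges absolutely (the stray $c^{\epsilon}$ merely shifts the exponent slightly and is harmless). The relevant tail estimate is $\sum_{c\ge K}(c,D)^{1/2}c^{-(k-1)}\ll \vert D\vert^{\epsilon}K^{2-k}$, which I would prove by again splitting over $d=(c,D)\mid D$, writing $c=de$, and summing the resulting convergent tail in $e$ (divisors $d>K$ contribute even less). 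One is then left with $\mathcal{S}^{\sharp}\ll \vert D\vert^{\epsilon}X^{k-3/2}K^{2-k}$.

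The one point requiring a little care — and essentially the only genuine obstacle — is converting this into the clean form $X^{3/2}K^{-1}$ of the statement. Factoring $X^{k-3/2}K^{2-k}=X^{3/2}\cdot K^{-1}\cdot (X/K)^{k-3}$ and invoking $K\ge X$ together with $k\ge 4$ (so that $k-3\ge 1$ and the bracket is $\le 1$) gives exactly $\mathcal{S}^{\sharp}\ll X^{3/2}\vert D\vert^{\epsilon}K^{-1}$. Thus the role of the hypothesis $C\le \vert D\vert/m\le K$ is twofold: it fixes the active Bessel regime for each tail, and it makes the upper-tail exponent collapse to $K^{-1}$. All remaining steps are the routine divisor-sum and geometric-series estimates indicated above.
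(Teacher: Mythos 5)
Your proof is correct and takes essentially the same route as the paper: a trivial estimate combining the Weil-type bound of Lemma~\ref{lm:imp_Kl_bound} with $J_{k-3/2}(x)\ll\min(x^{-1/2},x^{k-3/2})$, where the hypothesis $C\le |D|/m\le K$ pins down the active Bessel regime on each tail, and the divisor-grouping estimate $\sum_{c}(c,D)^{1/2}$ handles the arithmetic factor. The paper dispatches $\mathcal{S}^{\sharp}$ with ``the second estimate is similar''; your explicit treatment, including the final reduction $(\vert D\vert/m)^{k-3/2}K^{2-k}\le(\vert D\vert/m)^{3/2}K^{-1}$ via $K\ge\vert D\vert/m$ and $k\ge 4$, is exactly the detail the paper omits.
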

\begin{proof}
We start by recalling the bound
\begin{equation}
	J_{k-\frac{3}{2}}(x) \ll \min(x^{-\frac{1}{2}}, x^{k-\frac{3}{2}}).\label{eq:bessel}
\end{equation}
With this and Lemma~\ref{lm:imp_Kl_bound} we can estimate
\begin{align}
	\mathcal{S}^{\flat} &=\sum_{\pm}\sum_{1\leq c\leq C} \frac{\omega(c)}{c^{\frac{3}{2}}}H_{m,c}^{\pm}(n,r) e\left(\pm\frac{r^2}{2mc}\right)J_{k-3/2}\left(\frac{\pi\vert D\vert}{mc}\right) \nonumber\\
	&\ll \left(\frac{\vert D\vert}{m}\right)^{-\frac{1}{2}}\sum_{c=1}^Cc^{\epsilon}\cdot (c,D)^{\frac{1}{2}}\omega(c) \ll \left(\frac{\vert D\vert}{m}\right)^{-\frac{1}{2}}\vert D\vert^{\epsilon}C.\nonumber
\end{align}
The second estimate is similar.
\end{proof}

\begin{rem}
From here we can recover the estimate from Theorem~\ref{Kohnen}. Indeed, taking $P$ constant and $K=C=\frac{\vert D\vert}{m}$, yields
\begin{equation}
	\left(\frac{m}{\vert D\vert}\right)^{k-\frac{3}{2}}\frac{\vert c_{f_{\circ}}(n,r)\vert^2}{\langle f_{\circ},f_{\circ}\rangle_{\Gamma^J}} \ll \sqrt{m}+\frac{\vert D\vert^{\frac{1}{2}+\epsilon}}{m^{\frac{1}{2}}}.\nonumber
\end{equation}
This translates to the desired bound.
\end{rem}

We continue our treatment of $\mathcal{S}^{\ast}$. First, we write $c=qt$ for $t\mid(2mD)^{\infty}$ and $(t,q)=1$. Note that by construction $w(c)=w(q)$. Thus we obtain
\begin{equation}
	\mathcal{S}^{\ast} = \sum_{t\mid (2mD)^{\infty}}\underbrace{\sum_{\pm}\sum_{\substack{C<qt<D\\ (q,2mD)=1}}\frac{\omega(q)}{(qt)^{\frac{3}{2}}}H_{m,qt}^{\pm}(n,r)e\left(\pm\frac{r^2}{2mc}\right)J_{k-3/2}\left(\frac{4\pi \vert D\vert}{mtq}\right)}_{=\mathcal{T}_t}. \nonumber
\end{equation}
Using arguments as before we can trivially bound
\begin{equation}
	\mathcal{T}_t \ll t^{\epsilon-1}(t,D)^{\frac{1}{2}}\vert D\vert^{\epsilon}\left(\frac{\vert D\vert}{m}\right)^{\frac{1}{2}}.\nonumber
\end{equation}
Using this we can estimate
\begin{equation}
	\sum_{\substack{ t\mid (2mD)^{\infty}\\ t\geq T}} \mathcal{T}_t \ll \vert D\vert^{\epsilon}\left(\frac{\vert D\vert}{m}\right)^{\frac{1}{2}}\sum_{\substack{ t\mid (2mD)^{\infty}\\ t\geq T}}t^{\epsilon-1}(t,D)^{\frac{1}{2}} \ll \left(\frac{\vert D\vert}{m}\right)^{\frac{1}{2}}\vert D\vert^{\epsilon}T^{-1}.\nonumber
\end{equation}
On the other hand we can factor the Kloosterman sum using \eqref{eq:factorization}. Indeed, we obtain
\begin{equation}
	H_{m,qt}^{\pm}(n,r) = H_{mq,t}^{\pm}(n\overline{q},r)H_{mt,q}^{\pm}(n\overline{t},r). \nonumber
\end{equation}
The upshot is that $H_{mq,t}^{\pm}(n\overline{q},r)$ only depends on the congruence class of $q$ modulo $t$. Furthermore, if $D$ is a fundamental discriminant, we have the bound
\begin{equation}
	H_{mq,t}^{\pm}(n\overline{q},r) \ll t^{1+\epsilon}(t,D)^{\frac{1}{2}}\nonumber
\end{equation}
from Lemma~\ref{lm:imp_Kl_bound}. With these observations made we can write
\begin{equation}
	\mathcal{T}_t \ll \max_{\pm} {\sum_{s\text{ mod }4t}}^{\ast} t^{\epsilon-\frac{1}{2}}(t,D)^{\frac{1}{2}}\left\vert \sum_{\substack{C<qt<K,\\ q\equiv s \text{ mod }4t,\\ (q,2mD)=1}}\frac{w(q)}{q^{\frac{3}{2}}}H_{mt,q}^{\pm}(n\overline{t},r)e\left(\pm\frac{r^2}{2mtq}\right)J_{k-3/2}\left(\frac{4\pi \vert D\vert}{mtq}\right)\right\vert.
\end{equation}
After putting everything together we arrive at the following result.

\begin{lemmy}\label{lm:after_red}
For large parameters $P,T$ and for $0<C\leq \frac{\vert D\vert}{m} \leq D$ we have
\begin{multline}
	\left(\frac{m}{\vert D\vert}\right)^{k-\frac{3}{2}}\frac{\vert c_{f_{\circ}}(n,r)\vert^2}{\langle f_{\circ},f_{\circ}\rangle_{\Gamma^J}} \ll \vert D\vert^{\epsilon}m^{\frac{1}{2}}P + \vert D\vert^{\epsilon}\left(\frac{\vert D\vert}{m}\right)^{-\frac{1}{2}}C + \vert D\vert^{\epsilon}\left(\frac{\vert D\vert}{m}\right)^{\frac{3}{2}}K^{-1} \\
	+ \vert D\vert^{\epsilon}\left(\frac{\vert D\vert}{m}\right)^{\frac{1}{2}}T^{\epsilon-1}  + \max_{\pm}\sum_{\substack{t\mid(2mD)^{\infty}\\ t<T}}t^{\epsilon-\frac{1}{2}}(t,D)^{\frac{1}{2}} {\sum_{s\text{\normalfont{ mod }}t}}^{\ast}\vert\mathcal{T}_{t,s}^{\pm}\vert, \nonumber
\end{multline}
for
\begin{equation}
	\mathcal{T}_{t,s}^{\pm} = \sum_{\substack{C<qt<K,\\ q\equiv s \text{\normalfont{ mod }}4t,\\ (q,2mD)=1}}\frac{w(q)}{q^{\frac{3}{2}}}H_{mt,q}^{\pm}(n\overline{t},r)e\left(\pm\frac{r^2}{2mtq}\right)J_{k-3/2}\left(\frac{4m\vert D\vert}{mtq}\right).\nonumber
\end{equation}
\end{lemmy}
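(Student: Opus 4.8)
The plan is to assemble the estimates already established in the preceding discussion; the statement of the lemma is precisely the combined output of those intermediate bounds, so the proof is essentially bookkeeping. I would begin from the reduction \eqref{eq:starting_point}, which already controls the left-hand side by $\vert D\vert^{\epsilon}(m^{1/2}P + \vert\mathcal{S}\vert)$. The first term reproduces the $\vert D\vert^{\epsilon}m^{1/2}P$ contribution verbatim, so everything reduces to bounding $\vert\mathcal{S}\vert$. I would then invoke the triple splitting $\mathcal{S}=\mathcal{S}^{\flat}+\mathcal{S}^{\ast}+\mathcal{S}^{\sharp}$ according to the ranges $c\leq C$, $C<c<K$, and $K\leq c$, and apply Lemma~\ref{lm:tails} directly to read off the two tail contributions $\vert D\vert^{\epsilon}(\vert D\vert/m)^{-1/2}C$ and $\vert D\vert^{\epsilon}(\vert D\vert/m)^{3/2}K^{-1}$.

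Next I would treat the central piece $\mathcal{S}^{\ast}$ using the decomposition $\mathcal{S}^{\ast}=\sum_{t\mid(2mD)^{\infty}}\mathcal{T}_t$ obtained by writing $c=qt$ with $(q,2mD)=1$. Here I split the $t$-sum at the threshold $T$. For the tail $t\geq T$, the trivial bound $\mathcal{T}_t\ll t^{\epsilon-1}(t,D)^{1/2}\vert D\vert^{\epsilon}(\vert D\vert/m)^{1/2}$ together with convergence of $\sum_{t\mid(2mD)^{\infty},\,t\geq T}t^{\epsilon-1}(t,D)^{1/2}$ yields the contribution $\vert D\vert^{\epsilon}(\vert D\vert/m)^{1/2}T^{\epsilon-1}$, which is the fourth term in the asserted bound. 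For the remaining finitely many $t<T$, I would apply the factorization \eqref{eq:factorization} to split off $H_{mq,t}^{\pm}(n\overline{q},r)$, use that this factor depends only on the residue of $q$ modulo $t$ (equivalently modulo $4t$), and bound it by $t^{1+\epsilon}(t,D)^{1/2}$ via Lemma~\ref{lm:imp_Kl_bound}. Grouping the $q$-sum into arithmetic progressions modulo $4t$ then produces exactly the inner sums $\mathcal{T}_{t,s}^{\pm}$ and the prefactor $t^{\epsilon-1/2}(t,D)^{1/2}$, giving the final term.

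Collecting the five contributions—the $m^{1/2}P$ term from \eqref{eq:starting_point}, the two tail bounds from Lemma~\ref{lm:tails}, the $t\geq T$ tail, and the $t<T$ sum over residue classes—completes the estimate. I expect no genuine obstacle here, since every constituent bound is already in hand; the only point requiring mild care is bookkeeping the residue-class structure after factoring the Kloosterman sum, namely verifying that $H_{mq,t}^{\pm}(n\overline{q},r)$ genuinely depends only on $q\bmod 4t$ so that the decomposition into progressions $q\equiv s\ (\mathrm{mod}\ 4t)$ is legitimate and the detached factor may be pulled outside the inner sum. Once this is confirmed, the triangle inequality and the hypothesis $0<C\leq\vert D\vert/m\leq D$ (which guarantees the ranges in Lemma~\ref{lm:tails} are nonempty) assemble the displayed inequality.
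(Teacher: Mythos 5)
Your proposal is correct and follows essentially the same route as the paper, whose proof of this lemma is precisely the preceding chain: the starting bound \eqref{eq:starting_point}, the splitting $\mathcal{S}=\mathcal{S}^{\flat}+\mathcal{S}^{\ast}+\mathcal{S}^{\sharp}$ with Lemma~\ref{lm:tails}, the decomposition $c=qt$ with $t\mid(2mD)^{\infty}$, the trivial bound on $\mathcal{T}_t$ for $t\geq T$, and for $t<T$ the factorization \eqref{eq:factorization} together with Lemma~\ref{lm:imp_Kl_bound} and grouping $q$ into progressions modulo $4t$. The one point you flag for care (that $H_{mq,t}^{\pm}(n\overline{q},r)$ depends only on $q$ modulo $t$, hence may be detached on each class modulo $4t$) is exactly the observation the paper records before assembling the estimate.
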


\subsection{Estimation of $\mathcal{T}_{t,s}^{\pm}$}

This section is dedicated to the estimation of the sums $\mathcal{T}_{t,s}^{\pm}$. We start by inserting the evaluation of $H_{mt,c}^{\pm}(n\overline{t},r)$ from Lemma~\ref{lm:eval_KS} into the definition of $\mathcal{T}_{r,s}^{\pm}$. Note that $\epsilon_q=\epsilon_s$ since $q \equiv s\text{ mod }4.$ We arrive at
\begin{multline}
	\mathcal{T}_{t,s}^{\pm} = \epsilon_s^2\sum_{\substack{C<qt<K,\\ q\equiv s \text{ mod }4t,\\ (q,2mD)=1}}\frac{w(q)}{q^{\frac{1}{2}}}\left(\frac{-D}{q}\right) \\
	\cdot \sum_{v^2\equiv 1\text{ mod } q}e\left(\frac{\overline{2mt}Dv}{q}\mp \frac{\overline{2mt}r^2}{q}\pm \frac{r^2}{2mtq}\right) J_{k-3/2}\left(\frac{4\pi \vert D\vert}{mtq}\right).\nonumber
\end{multline}
We start by massaging this expression a little bit. Using the reciprocity formula $\frac{\overline{y}}{x}+\frac{\overline{x}}{y} \equiv \frac{1}{xy}\text{ mod }1$ we can simplify
\begin{equation}
	\mp \frac{\overline{2mt}r^2}{q}\pm \frac{r^2}{2mtq} \equiv \mp \frac{\overline{q}r}{2mt} \text{ mod }1. \nonumber
\end{equation}

 We first observe that we can replace the sum over $v^2\equiv 1\text{ mod }q$ by summing over $ab=q$ with $(a,q)=1$ and setting $v=a\overline{a}-b\overline{b}$. Thus we have
\begin{equation}
	\sum_{v^2\equiv 1\text{ mod } q}e\left(\frac{\overline{2mt}Dv}{q}\pm \frac{\overline{q}r^2}{2mt}\right) = \sum_{\substack{ab=q \\ (a,b)=1}}e\left(\frac{\overline{2mt}D\overline{a}}{b}-\frac{\overline{2mt}D\overline{b}}{a}\pm \frac{\overline{ab}r^2}{2mt}\right)
\end{equation}
If $a<b$ we use the reciprocity formula $\frac{\overline{x}}{y}+\frac{\overline{y}}{x} \equiv \frac{1}{xy} \text{ mod }1$ to replace $\frac{\overline{2mta}}{b}$ by $-\frac{\overline{b}}{2mta}+\frac{1}{2mtab}$. On the other hand, if $a<b$ we do the same for $\frac{\overline{2mtb}}{a}$.  After doing so, we can swap the roles of $a$ and $b$, so that we obtain
\begin{equation}
	\sum_{v^2\equiv 1\text{ mod } q}e\left(\frac{\overline{2mt}Dv}{q}\pm \frac{\overline{q}r^2}{2mt}\right) = \sum_{\epsilon_2 = \pm 1}\sum_{\substack{ab=q \\ (a,b)=1 \\ a<b}}e\left(\epsilon_2\frac{D\overline{b}}{2mta}+\epsilon_2\frac{\overline{2mt}D\overline{b}}{a}-\epsilon_2\frac{D}{2mtab}\pm \frac{\overline{ab}r^2}{2mt}\right). \nonumber
\end{equation}
Next, define
\begin{equation}
	j_{\epsilon_2}(x) = x^{\frac{1}{2}}e(\epsilon_2 x) J_{k-3/2}(8\pi x).\nonumber
\end{equation}
We arrive at
\begin{multline}
	\mathcal{T}_{t,s}^{\epsilon_1} = \epsilon_s^2\left(\frac{-1}{s}\right)\left(\frac{2mt}{\vert D\vert}\right)^{\frac{1}{2}}\sum_{\epsilon_2=\pm 1}\sum_{\substack{C<abt<K,\\ ab\equiv s \text{ mod }4t,\\ (ab,2mD)=1 \\ a<b, \, (a,b)=1}}w(ab)\left(\frac{D}{ab}\right)\\ \cdot e\left(\epsilon_2\frac{D\overline{b}}{2mta}+\epsilon_2\frac{\overline{2mt}D\overline{b}}{a}+\epsilon_1 \frac{\overline{ab}r^2}{2mt}\right) j_{\epsilon_2}\left(\frac{ \vert D\vert}{2mtab}\right), \nonumber
\end{multline}
for $\epsilon_1=\pm 1$.

We temporarily define
\begin{equation}
	S_a(B) = \sum_{\substack{\max(a,\frac{C}{at})<b<B,\\ ab\equiv s\text{ mod }4t}} \omega(ab) \left(\frac{D}{b}\right) e\left(f(a,t)\frac{\overline{b}}{2mta}\right).\nonumber
\end{equation}
for 
\begin{equation}
	f(a,t) = \epsilon_2 D(1+2mt\cdot \overline{2mt})+\epsilon_1a\cdot \overline{a}r^2. \label{def_f}
\end{equation}
Here $\overline{2mt}$ is a (representative of) the modular inverse of $2mt$ modulo $a$. Similarly $\overline{a}$ is an inverse of $a$ modulo $2mt$. We drop the dependence of $f$ on $r,m,D$ as well as on the signs $\epsilon_1,\epsilon_2$ from the notation. We compute 
\begin{multline}
	\mathcal{T}_{t,s}^{\epsilon_1} \ll \left(\frac{2mt}{\vert D\vert }\right)^{\frac{1}{2}} \max_{\epsilon_1=\pm} \sum_{\substack{a<\sqrt{K/t},\\ (a,2mD)=1}}  \bigg\vert\sum_{\substack{\max(a,\frac{C}{at})<b<K/at,\\ ab\equiv s \text{ mod }4t,\\ (b,2mDa)=1}}w(ab)\left(\frac{D}{b}\right) \\ \cdot e\left(\epsilon_2\frac{D\overline{b}}{2mta}+\epsilon_2\frac{\overline{2mt}D\overline{b}}{a}+\epsilon_1 \frac{\overline{ab}r^2}{2mt}\right) j_{\epsilon_2}\left(\frac{ \vert D\vert}{2mtab}\right) \bigg\vert \nonumber 
\end{multline}
After applying partial integration in the $b$-sum we arrive at
\begin{equation}
	\mathcal{T}_{t,s}^{\epsilon_1} \ll C^{-1}\left(\frac{\vert D\vert t}{m}\right)^{\frac{1}{2}} \max_{\epsilon_1=\pm} \sum_{\substack{a<\sqrt{K/t},\\ (a,2mD)=1}} \max_{B\leq K/at} \vert S_a(B)\vert, \label{eq:Tst}
\end{equation}
here we have only used the easy bounds $j(x) \ll x^{\frac{1}{2}}$, $j(x)\ll 1$ and $j'(x)\ll 1$.

Note that on the support of the $a$- and $b$-sums we have $\omega(ab) = \omega(a)+\omega(b)$ we can thus decompose
\begin{equation}
	S_a(B) = V_a+\omega(a)V_a'\nonumber
\end{equation}
with
\begin{equation}
	V_a = \sum_{\substack{\max(a,\frac{C}{at})<b<B,\\ ab\equiv s\text{ mod }4t}} \omega(b) \left(\frac{D}{b}\right) e\left(f(a,t)\frac{\overline{b}}{2mta}\right)\nonumber
\end{equation}
and 
\begin{equation}
	V_a' = \sum_{\substack{\max(a,\frac{C}{at})<b<B,\\ ab\equiv s\text{ mod }4t}} \left(\frac{D}{b}\right) e\left(f(a,t)\frac{\overline{b}}{2mta}\right).\nonumber
\end{equation}
In the rest of the section we will estimate $V_a$. The same bound can be obtained for $\omega(a)V_a'$ using similar arguments.

\begin{lemmy}\label{lm:V_a}
For $(KT)^{\frac{1}{2}} \ll \vert D\vert$ we have
\begin{equation}
		V_a \ll amP\vert D\vert^{\frac{1}{2}+\epsilon}\nonumber
\end{equation}
and
\begin{equation}
	V_a \ll \vert D\vert^{\epsilon}\left[\frac{K}{P^{\frac{1}{2}}ta}+\frac{K}{(ta)^{\frac{3}{2}}} + \frac{K^{\frac{1}{2}}m^{\frac{1}{4}}}{a^{\frac{1}{4}}t^{\frac{1}{2}}}(1+\frac{P}{t})^{\frac{1}{2}}\right]. \nonumber
\end{equation}
The same bounds hold for $\omega(a)V_a'$.
\end{lemmy}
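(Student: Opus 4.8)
The plan is to separate the three arithmetic ingredients of $V_a$: the prime weight $\omega(b)$, the Kronecker symbol $\left(\tfrac{D}{b}\right)$, which is a primitive character modulo $\vert D\vert$ because $D$ is a fundamental discriminant, and the inverse-linear phase $e\!\left(\tfrac{f(a,t)\overline{b}}{2mta}\right)$. In both estimates I would first linearise the weight, writing $\omega(b)=\sum_{P\le p\le 2P,\ p\nmid mD,\ p\mid b}\log p$ and substituting $b=pb'$. Since the relevant primes are large and coprime to $2mDt$, this reduces $V_a$ to a short average over $p$ of character-twisted inverse-phase sums $\sum_{b'}\left(\tfrac{D}{b'}\right)e\!\left(\tfrac{f\,\overline{p}\,\overline{b'}}{2mta}\right)$, with $b'$ ranging over an interval of length $\ll K/(atp)$ inside a progression modulo $4t$. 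The two claimed bounds come from the two different places one can extract cancellation.

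For the bound $V_a\ll amP\vert D\vert^{1/2+\epsilon}$ I would freeze the phase by detecting the residue of $b'$ modulo $2mta$. Because $t\mid 2mta$ while the progression $b\equiv s\ (\mathrm{mod}\ 4t)$ already fixes $b'$ modulo $t$, only $\ll ma$ residues are admissible, and for each of them one is left with a sum of $\left(\tfrac{D}{\cdot}\right)$ over an interval inside a progression. Here the fundamentality of $D$ is essential: the symbol is a primitive character modulo $\vert D\vert$, so the Polya--Vinogradov inequality bounds each such sum by $\vert D\vert^{1/2+\epsilon}$. Summing over the $\ll ma$ residues and over the primes, which contribute $\sum\log p\asymp P$, gives the estimate.

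For the second bound the loss of $\vert D\vert^{1/2}$ is not affordable, so I would draw cancellation from the equidistribution of $\overline{b'}$ instead of from the character. The idea is to discard the $b'$-character by Cauchy--Schwarz, placing $\left(\tfrac{D}{b'}\right)$ on the trivially bounded side, then open the resulting square in the prime average and complete the inner sums $\sum_{b'}e\!\left(\tfrac{f(\overline{p_1}-\overline{p_2})\overline{b'}}{2mta}\right)$ by Lemma~\ref{lm:completeion}. The diagonal $p_1=p_2$ produces the main terms $\tfrac{K}{P^{1/2}ta}$ and $\tfrac{K}{(ta)^{3/2}}$, while the off-diagonal contributions are classical Kloosterman sums whose Weil bound---already built into Lemma~\ref{lm:completeion}---yields the square-root term $\tfrac{K^{1/2}m^{1/4}}{a^{1/4}t^{1/2}}(1+P/t)^{1/2}$; the fourth-root shape $m^{1/4}a^{1/4}$ is exactly the modulus $(2mta)^{1/2}$ of the completed Kloosterman sum processed through the two square roots.

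The main obstacle is the bookkeeping of the moduli. One must control the common factors among $2mta$, $4t$ and $\vert D\vert$, keep track of the reductions of $f(a,t)$ modulo $a$ and modulo $2mt$ that were fixed by the reciprocity steps, and handle the mild dependence of the $b'$-range on $p$ after the substitution $b=pb'$ (absorbed by working on the dyadic scale $p\asymp P$). The delicate point specific to the second bound is that the completion error $(2mta)^{1/2}$ must not be multiplied by the conductor $\vert D\vert$; this is precisely why the character is eliminated by Cauchy--Schwarz rather than resolved into residue classes modulo $\vert D\vert$. Finally, $\omega(a)V_a'$ is treated identically, with the constant weight in place of $\omega(b)$, so the same two bounds hold.
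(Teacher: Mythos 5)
Your overall route coincides with the paper's: open $\omega(b)$ to produce a bilinear sum over primes $p$ and a long variable, prove the first bound by sorting the long variable into the $\ll ma$ admissible residue classes modulo $4tma$ and applying Polya--Vinogradov to the primitive character $\left(\tfrac{D}{\cdot}\right)$, and prove the second bound by Cauchy--Schwarz with the character on the trivially bounded side, opening the square over pairs of primes, and completing the inner sum with Lemma~\ref{lm:completeion}. Your first bound is correct and matches the paper's argument. The second bound, however, has two concrete gaps in the off-diagonal analysis.

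First, you never use the fact that the two congruences $ap_1b'\equiv s$ and $ap_2b'\equiv s \pmod{4t}$, which must hold simultaneously for a common $b'$ once the square is opened, force $p_1\equiv p_2\pmod{4t}$. This constraint is what cuts the number of off-diagonal pairs down to $\ll P(1+P/t)$ (the only possible source of the factor $(1+P/t)^{1/2}$ you state), and, since $4t\mid p_1-p_2$, it reduces the phase
\begin{equation}
	f(a,t)\,\frac{(\overline{p_1}-\overline{p_2})\,\overline{b'}}{2mta} \equiv f(a,t)\cdot\frac{p_2-p_1}{4t}\cdot\frac{2\,\overline{p_1p_2}\,\overline{b'}}{ma} \pmod 1 \nonumber
\end{equation}
to one of modulus $\asymp ma$, not $2mta$ as you claim. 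With modulus $2mta$ and unconstrained pairs, your third term would be larger by a factor $t^{3/4}$ (for $P\geq t$) and the lemma would fail; indeed your own accounting, $(2mta)^{1/2}$ ``processed through the two square roots,'' would give $(2mta)^{1/4}$, which is inconsistent with the $t^{-1/2}$ in the bound you assert.

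Second, and more seriously, for off-diagonal pairs Lemma~\ref{lm:completeion} contributes not only the Weil term $c^{1/2}$ but also the term $\tfrac{L}{c}(a',c)$ with $a'=f(a,t)\tfrac{p_1-p_2}{4t}\cdot 2\overline{p_1p_2}$, and this gcd can be large. Your sketch treats the off-diagonal contribution as purely square-root sized, which silently assumes this term away. The paper controls it by observing that $(f(a,t),a)=1$ (because $f(a,t)\equiv 2\epsilon_2 D \pmod a$ and $(a,2D)=1$), whence $(a',ma)\leq m\cdot\bigl(\tfrac{p_1-p_2}{4t},a\bigr)$, followed by a divisor argument over $d=\bigl(\tfrac{p_1-p_2}{4t},a\bigr)$. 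It is exactly this off-diagonal gcd analysis --- not the diagonal, as you assert --- that produces the middle term $K/(ta)^{3/2}$; the diagonal only yields $B^2/P$ and $B(ma)^{1/2}$, i.e.\ $K/(P^{1/2}ta)$ and part of the third term. If one bounds the off-diagonal gcd trivially by $ma$, the resulting contribution is $\tfrac{B^2}{P}\bigl(1+\tfrac{P}{t}\bigr)$, i.e.\ $K/(at^{3/2})$ after taking square roots, which is weaker than $K/(ta)^{3/2}$ by $a^{1/2}$. This loss is fatal downstream: the proof of Lemma~\ref{Tst} needs $\sum_{a>X}(ta)^{-3/2}\ll t^{-3/2}X^{-1/2}$, whereas $\sum_{a>X}a^{-1}t^{-3/2}$ gains nothing as $X$ grows, so the balancing choice of $X$ and ultimately the exponents of Theorem~\ref{th:bound_jacobi} would be lost.
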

\begin{proof}
Put $A=\max(a,\frac{C}{at})$. We start with an estimate that will be useful when $a$ is relatively small. In this case we open the definition of $\omega(b)$ and obtain
\begin{equation}
	V_a = \sum_{\substack{P<p<2P,\\ p\nmid 2mDa}} \log(p)\left(\frac{D}{p}\right)\sum_{\substack{A/p < l <B/p,\\ (l,2mDa)=1, \\ apl\equiv s\text{ mod }4t}}\left(\frac{D}{l}\right)e\left(f(a,t)\frac{\overline{pl}}{2mta}\right).\nonumber
\end{equation}
Using the Polya-Vinogradov inequality we estimate
\begin{equation}
	V_a \ll \log(P)\sum_{\substack{P<p<2P,\\ p\nmid 2mDa}} {\sum_{\substack{\lambda\text{ mod }4tma, \\ \lambda \equiv \overline{ap}s \text{ mod }4t}}}^{\ast}\left\vert \sum_{\substack{A/p < l <B/p,\\  l\equiv \lambda \text{ mod }4tma}}\left(\frac{D}{l}\right)\right\vert.\nonumber
\end{equation}
Note that, since $D$ is a fundamental discriminant and $tma\ll \sqrt{KT} \ll \vert D\vert$, the character $\left(\frac{D}{l}\right)$ is non-principal. Thus, applying the Polya-Vinogradov inequality we can estimate the $l$-sum by $\vert D\vert^{\frac{1}{2}}\log(\vert D\vert)$. We obtain the bound
\begin{equation}
	V_a \ll am P\vert D\vert^{\frac{1}{2}+\epsilon}.\label{eq:small}
\end{equation}
This is the first bound.

The second bound, which is good for $a$ not too small, is obtained by applying Cauchy-Schwarz to find
\begin{equation}
	V_a^2\ll \frac{B}{P}\sum_{\substack{\frac{A}{P}<l<\frac{B}{P},\\(l,2ma)=1}}\left\vert \sum_{\substack{P_1<p<P_2\\ p\nmid 2mDa,\\ alp\equiv s \text{ mod }4t}}\log(p)\left(\frac{D}{p}\right)e\left(f(a,t)\frac{\overline{pl}}{2mta}\right) \right\vert^2,\nonumber
\end{equation}
where $P_1=\max(P,A/l)$ and $P_2=\min(2P,B/l)$. As usual we open the square and take the $l$-sum inside. We obtain
\begin{equation}
	V_a^2 \ll \frac{B}{P}\log(P)^2 \sum_{\substack{P<p\leq p'<2P,\\\ p\equiv p'\text{ mod }4t \\ p,p'\nmid 2mDa}} \left\vert \sum_{\substack{\frac{A}{p'}<l<\frac{B}{p} \\ apl\equiv s \text{ mod }4t\\(l,2ma)=1}} e\left(f(a,t)\cdot \frac{p-p'}{4t}\cdot \frac{2\overline{pp'l}}{ma}\right)\right\vert.\nonumber 
\end{equation}
Estimating the $l$-sum using Lemma~\ref{lm:completeion} yields 
\begin{equation}
	V_a^2 \ll \vert D\vert^{\epsilon}\frac{B}{P}\sum_{\substack{P<p\leq p'<2P,\\\ p\equiv p'\text{ mod }4t \\ p,p'\nmid 2mDa}} \bigg[\frac{B}{pma}\Big(f(a,t)\cdot \frac{p-p'}{4t},ma\Big)  +(ma)^{\frac{1}{2}}\bigg].\label{eq:above}
\end{equation} 
At this point we have to investigate the greatest common divisor that appears in this bound. To do so we recall the definition of $f(a,t)$ from \eqref{def_f} and note that $(f(a,t),a)=1$. This is because $a$ is co-prime to $2D$. Thus, we obtain
\begin{equation}
	\Big(f(a,t)\cdot \frac{p-p'}{4t},ma\Big) = \Big(\frac{p-p'}{4t},a\Big)\cdot \Big(f(a,t)\cdot \frac{p-p'}{4t},m\Big)
\end{equation}
Here we compute that $p\mid m$ and $p\mid f(a,t)$ precisely when $p\mid (\epsilon_1+\epsilon_2)r^2$. Therefore, the best we can do is to estimate
\begin{equation}
	\Big(f(a,t)\cdot \frac{p-p'}{4t},ma\Big) \leq m\cdot \Big(\frac{p-p'}{4t},a\Big).\nonumber
\end{equation}
Plugging this in \eqref{eq:above} gives
\begin{align}
	V_a^2 &\ll \vert D\vert^{\epsilon}\frac{B}{P}\sum_{d\mid a}\sum_{\substack{P<p<2P,\\\  p\nmid 2mDa}} \sum_{\substack{P< p' <2P,\\ p'\equiv p\text{ mod }4td}} \bigg[\frac{B}{pa} d+(ma)^{\frac{1}{2}}\bigg]\nonumber \\
	&\ll \vert D\vert^{\epsilon}\left[\frac{B^2}{P} + \frac{B^2}{ta} + B(ma)^{\frac{1}{2}}(1+\frac{P}{t})\right].\label{eq:large} 
\end{align} 
where the first contribution arises from the diagonal. Inserting the bound $B\leq \frac{K}{at}$ gives the desired second estimate.
\end{proof}

\begin{lemmy}\label{Tst}
Let $P,C,K,T>0$ be parameters satisfying $P\ll \vert D\vert$, $0<C\leq \frac{\vert D\vert}{m} \leq K\ll \vert D\vert^2$ and $\sqrt{KT}\ll \vert D\vert$.  We have
\begin{equation}
	\mathcal{T}_{t,s}^{\epsilon_1} \ll \frac{\vert D\vert^{\frac{1}{2}+\epsilon}}{m^{\frac{1}{2}}C}\left(\frac{\vert D\vert^{\frac{1}{10}}m^{\frac{1}{5}}P^{\frac{1}{5}}K^{\frac{4}{5}}}{t^{\frac{7}{10}}}+\frac{K}{P^{\frac{1}{2}}t^{\frac{1}{2}}}+\frac{K^{\frac{7}{8}}m^{\frac{1}{4}}}{t^{3/8}}(1+P/t)^{\frac{1}{2}}\right).\nonumber
\end{equation}
\end{lemmy}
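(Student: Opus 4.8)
The plan is to feed the two complementary estimates of Lemma~\ref{lm:V_a} into the bound \eqref{eq:Tst}, which has already reduced the task to controlling $\sum_{a<\sqrt{K/t}}\max_{B\leq K/at}\vert S_a(B)\vert$. Writing $S_a(B)=V_a+\omega(a)V_a'$ and recalling that Lemma~\ref{lm:V_a} supplies the \emph{same} pair of bounds for both $V_a$ and $\omega(a)V_a'$, it suffices to estimate the $a$-sum of whichever of the two bounds is smaller. The first bound $V_a\ll amP\vert D\vert^{\frac{1}{2}+\epsilon}$ grows with $a$ and is useful only for small $a$, whereas the second bound decreases in $a$. The natural strategy is therefore to split the $a$-sum at a threshold $A_0$, to be chosen, using the first bound for $a\leq A_0$ and the second for $a>A_0$.

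For $a\leq A_0$ the first bound sums to $\sum_{a\leq A_0}amP\vert D\vert^{\frac{1}{2}+\epsilon}\ll A_0^2 mP\vert D\vert^{\frac{1}{2}+\epsilon}$. For $a>A_0$ I would sum the three terms of the second bound separately: the term $\frac{K}{P^{1/2}ta}$ contributes $\ll \frac{K}{P^{1/2}t}\vert D\vert^{\epsilon}$ after the harmonic sum $\sum_a a^{-1}\ll\vert D\vert^{\epsilon}$ (using $K\ll\vert D\vert^2$); the term $\frac{K}{(ta)^{3/2}}$ contributes $\ll \frac{K}{t^{3/2}A_0^{1/2}}$ via $\sum_{a>A_0}a^{-3/2}\ll A_0^{-1/2}$; and the term $\frac{K^{1/2}m^{1/4}}{a^{1/4}t^{1/2}}(1+\frac{P}{t})^{1/2}$, summed over the whole range with $\sum_{a<\sqrt{K/t}}a^{-1/4}\ll (K/t)^{3/8}$, contributes $\ll \frac{K^{7/8}m^{1/4}}{t^{7/8}}(1+\frac{P}{t})^{1/2}$. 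Only the first term and the $\frac{K}{t^{3/2}A_0^{1/2}}$ term depend on $A_0$, so I would balance them by choosing $A_0=\bigl(K t^{-3/2}(mP\vert D\vert^{1/2})^{-1}\bigr)^{2/5}$, which makes both equal to $\frac{\vert D\vert^{1/10}m^{1/5}P^{1/5}K^{4/5}}{t^{6/5}}$.

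Collecting the three surviving contributions gives
\[
\sum_{a<\sqrt{K/t}}\max_B\vert S_a(B)\vert\ll \vert D\vert^{\epsilon}\left(\frac{\vert D\vert^{1/10}m^{1/5}P^{1/5}K^{4/5}}{t^{6/5}}+\frac{K}{P^{1/2}t}+\frac{K^{7/8}m^{1/4}}{t^{7/8}}\Bigl(1+\tfrac{P}{t}\Bigr)^{1/2}\right),
\]
and multiplying by the prefactor $C^{-1}(\vert D\vert t/m)^{1/2}$ from \eqref{eq:Tst} reproduces the claimed estimate after simplifying the powers of $t$ (namely $t^{6/5-1/2}=t^{7/10}$, $t^{1-1/2}=t^{1/2}$ and $t^{7/8-1/2}=t^{3/8}$). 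Once the split is in place the computation is essentially mechanical, so the only genuine care is needed in the boundary cases where the optimal $A_0$ falls outside the range $[1,\sqrt{K/t}]$; there one simply uses a single bound throughout the $a$-sum and checks that the resulting contribution is dominated by one of the three displayed terms, so the stated estimate persists. The hypotheses $P\ll\vert D\vert$, $C\leq \vert D\vert/m\leq K\ll\vert D\vert^2$ and $\sqrt{KT}\ll\vert D\vert$ enter only to guarantee that Lemma~\ref{lm:V_a} is applicable over the whole range of $a$.
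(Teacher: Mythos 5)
Your proposal is correct and takes essentially the same route as the paper: the paper likewise splits the $a$-sum in \eqref{eq:Tst} at a threshold $X$ (your $A_0$), applies the first bound of Lemma~\ref{lm:V_a} for $a\leq X$ and the second for $a>X$, and balances the two threshold-dependent terms with exactly your choice $X=K^{2/5}\vert D\vert^{-1/5}(mP)^{-2/5}t^{-3/5}$, yielding the same three exponent combinations after multiplying by the prefactor $C^{-1}(\vert D\vert t/m)^{1/2}$. Your remark on the boundary cases where the optimal threshold leaves $[1,\sqrt{K/t}]$ is a detail the paper leaves implicit, but otherwise the arguments coincide.
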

\begin{proof}
Let $0<X\leq \sqrt{\frac{K}{t}}$ be a parameter to be chosen shortly. We insert the results from Lemma~\ref{lm:V_a} into \eqref{eq:Tst} and execute the $a$-sum. More precisely, we use the first bound of Lemma~\ref{lm:V_a} for $a\leq X$ and the second one for $a>X$. We get
\begin{equation}
	\mathcal{T}_{t,s}^{\epsilon_1} \ll \frac{\vert D\vert^{\frac{1}{2}+\epsilon}}{m^{\frac{1}{2}}C}\left(X^2mP\vert D\vert^{\frac{1}{2}}t^{\frac{1}{2}}+\frac{K}{tX^{\frac{1}{2}}}+ \frac{K}{P^{\frac{1}{2}}t^{\frac{1}{2}}}+\frac{K^{\frac{7}{8}}m^{\frac{1}{4}}}{t^{3/8}}(1+P/t)^{\frac{1}{2}}\right).\nonumber
\end{equation}
We choose $X = \frac{K^{\frac{2}{5}}}{\vert D\vert^{\frac{1}{5}}(mP)^{\frac{2}{5}}t^{\frac{3}{5}}}$ to obtain the desired result.
\end{proof}

\subsection{The endgame}

We now insert the bound from Lemma~\ref{Tst} into Lemma~\ref{lm:after_red}. Estimating the $s$-sum trivially and executing the remaining $t$-sum using the Rankin-trick we obtain
\begin{multline}
		\left(\frac{m}{\vert D\vert}\right)^{k-\frac{3}{2}}\frac{\vert c_{f_{\circ}}(n,r)\vert^2}{\langle f_{\circ},f_{\circ}\rangle_{\Gamma^J}} \\ \ll \vert D\vert^{\epsilon} m^{\frac{1}{2}}P + \vert D\vert^{\epsilon}\left(\frac{\vert D\vert}{m}\right)^{-\frac{1}{2}}C + \vert D\vert^{\epsilon}\left(\frac{\vert D\vert}{m}\right)^{\frac{3}{2}}K^{-1}+\vert D\vert^{\epsilon}\left(\frac{\vert D\vert}{m}\right)^{\frac{1}{2}}T^{-1} \\
		+ \frac{\vert D\vert^{\frac{1}{2}+\epsilon}}{m^{\frac{1}{2}}}\left(\frac{\vert D\vert^{\frac{1}{10}}m^{\frac{1}{5}}P^{\frac{1}{5}}T^{\frac{3}{10}}K^{\frac{4}{5}}}{C}+\frac{KT^{\frac{1}{2}}}{CP^{\frac{1}{2}}}+\frac{K^{\frac{7}{8}}m^{\frac{1}{4}}T^{\frac{1}{8}}(T^{\frac{1}{2}}+P^{\frac{1}{2}})}{C}\right). \nonumber
\end{multline}
Here we have estimated the $s$-sum trivially and the $t$-sum using the Rankin-trick.

We will now make a suitable choice for the parameters $P,C,K,T$. Note that we did not attempt to fully optimize this choice and we expect that different choices might lead to improved bounds in certain ranges. We first, for $0<\delta<\frac{1}{2}$, put $$C=(\vert D\vert/m)^{1-\delta},\, K=(\vert D  \vert/m)^{1+\delta} \text{ and } T=(\vert D\vert/m)^{\delta}.$$ Assuming $P\geq T$ and dropping some irrelevant terms leads to 
\begin{multline}
	D^{-\epsilon}\left(\frac{m}{\vert D\vert}\right)^{k-1}\frac{\vert c_{f_{\circ}}(n,r)\vert^2}{\langle f_{\circ},f_{\circ}\rangle_{\Gamma^J}}  \ll  \left(\frac{\vert D\vert}{m}\right)^{-\delta}+  m^{\frac{3}{10}}P^{\frac{1}{5}}\left(\frac{\vert D\vert}{m}\right)^{\frac{21}{10}\delta-\frac{1}{10}} \\+\frac{1}{P^{\frac{1}{2}}}\left(\frac{\vert D\vert}{m}\right)^{\frac{5}{2}\delta} +m^{\frac{1}{4}}P^{\frac{1}{2}}\left(\frac{\vert D\vert}{m}\right)^{2\delta-\frac{1}{8}}. \label{before_P}
\end{multline}
We equalize the second and the third term by choosing
\begin{equation}
	P=m^{-\frac{3}{10}}\left(\frac{\vert D\vert}{m}\right)^{\frac{1}{10}+\frac{2}{5}\delta}. \nonumber
\end{equation}
In this case $P\geq T$ holds as long as $m\leq \vert D\vert^{\frac{1+15\delta}{4+15\delta}}$. We arrive at
\begin{equation}
	D^{-\epsilon}\left(\frac{m}{\vert D\vert}\right)^{k-1}\frac{\vert c_{f_{\circ}}(n,r)\vert^2}{\langle f_{\circ},f_{\circ}\rangle_{\Gamma^J}}  \ll  \left(\frac{\vert D\vert}{m}\right)^{-\delta} +m^{\frac{3}{14}}\left(\frac{\vert D\vert}{m}\right)^{\frac{27}{14}\delta-\frac{1}{7}} +m^{\frac{1}{28}}\left(\frac{\vert D\vert}{m}\right)^{\frac{16}{7}\delta-\frac{3}{56}}. \nonumber
\end{equation}
We now take $m=\vert D\vert^{\sigma}$ for $0\leq \sigma\leq \frac{1}{2}$. We now choose 
\begin{equation}
	\delta=\frac{1}{72}\cdot\frac{3-5\sigma}{1-\sigma}. \nonumber
\end{equation}
This is positive and one verifies $P\geq T$ as long as $\sigma\leq \frac{39}{121}$. We get 
\begin{equation}
	D^{-\epsilon}\left(\frac{m}{\vert D\vert}\right)^{k-1}\frac{\vert c_{f_{\circ}}(n,r)\vert^2}{\langle f_{\circ},f_{\circ}\rangle_{\Gamma^J}}  \ll m^{\frac{5}{72}}\vert D\vert^{-\frac{3}{72}} +m^{\frac{25}{112}}\vert D\vert^{-\frac{1}{16}}. \nonumber
\end{equation}
We obtain that
\begin{equation}
	D^{-\epsilon}\left(\frac{m}{\vert D\vert}\right)^{k-1}\frac{\vert c_{f_{\circ}}(n,r)\vert^2}{\langle f_{\circ},f_{\circ}\rangle_{\Gamma^J}}  \ll \begin{cases}
		m^{\frac{5}{72}}\vert D\vert^{-\frac{3}{72}} &\text{ if }1\leq m\leq \vert D\vert^{\frac{21}{155}}, \text{ and }\\
		m^{\frac{25}{112}}\vert D\vert^{-\frac{1}{16}} &\text{ if } \vert D\vert^{\frac{21}{155}}\leq m\leq \vert D\vert^{\frac{7}{25}}.
	\end{cases} \nonumber
\end{equation}
Outside this range our bound is worse than Kohnen's result. The statement of Theorem~\ref{th:bound_jacobi} follows by interpolating the two cases above.


\end{document}